\newcommand{\msf}[1]{\mathsf {#1}}
\newcommand{\mbf}[1]{{\mathbf {#1}}}
\newcommand{\mcal}[1]{{\mathcal {#1}}} 
\newtheorem{theorem}{Theorem}   [section]
\newtheorem{lemma}[theorem]{Lemma}
\newtheorem{corollary}[theorem]{Corollary}
\newtheorem{proposition}[theorem]{Proposition}
\theoremstyle{definition}\newtheorem{remark}[theorem]{Remark}
\theoremstyle{definition}
\theoremstyle{definition}\newtheorem{example}[theorem]{Example}
\theoremstyle{definition}
\theoremstyle{definition}
\theoremstyle{definition}
\theoremstyle{plain}\newtheorem*{theorem*}{Theorem}
\theoremstyle{plain}\newtheorem*{corollary*}{Corollary}
\theoremstyle{remark}\newtheorem*{remark*}{Remark}
\theoremstyle{remark}\newtheorem*{remarks*}{Remarks}
\theoremstyle{definition}\newtheorem*{conjecture*}{Conjecture}
\theoremstyle{definition}\newtheorem*{definition*}{Definition}
\theoremstyle{definition}\newtheorem*{example*}{Example}
\theoremstyle{definition}\newtheorem*{question*}{Question}
\theoremstyle{definition}\newtheorem*{questions*}{Questions}
\theoremstyle{definition}\newtheorem*{hypothesis*}{Hypothesis}
\theoremstyle{definition}\newtheorem{model}[theorem]{Model}
\def\qed{\ifhmode\unskip\nobreak\fi\ifmmode\ifinner\else\hskip5pt\fi\fi
 \hfill\hbox{\hskip5pt\vrule width4pt height6pt depth1.5pt\hskip1pt}}
\newcommand{\ov}[1]{\ensuremath{\overline{#1}}}
\newcommand{\Int}{\ensuremath{\operatorname{\mathsf{Int}}}}
\newcommand{\RR}{\ensuremath{{\mathbb R}}}     
\newcommand{\R}[1]{\ensuremath{{\mathbb R}^{#1}}} 
\newcommand{\FF}{\ensuremath{{\mathbb F}}}     
\renewcommand{\H}[1]{\ensuremath{{\mathbb H}^{#1}}} 
\newcommand{\Rp}{\ensuremath{{\mathbb R}_+}}   
\newcommand{\CC}{\ensuremath{{\mathbb C}}}     
\newcommand{\ZZ}{\ensuremath{{\mathbb Z}}}	   
\newcommand{\om}[1]{\ensuremath{\omega(#1)}}
\newcommand{\emp}{\ensuremath{\emptyset}}
\newcommand {\pd}[1] {\ensuremath{\frac{\partial}{\partial #1}}}
\def\d#1dt{\frac{d#1}{dt}}    
\newcommand{\eps}{\ensuremath{\epsilon}}
\newcommand{\Lam}{\ensuremath{\Lambda}}
\newcommand{\Fix}[1]{\ensuremath{\operatorname{\mathsf {Fix}}(#1)}}
\newcommand{\co}{\colon\thinspace} 
\renewcommand{\ll}{\ensuremath{\mathfrak l}} 
\renewcommand{\ss}{\ensuremath{\mathfrak {s}}}
\renewcommand{\ll}{\ensuremath{\mathfrak l}} 
\newcommand{\sm}[1]{\ensuremath{\setminus #1}}
\def \za{\alpha}
\def \zb{\beta}
\def \zg{\gamma}
\def \zl{\lambda}
\def \zm{\mu}
\def \zp{\pi}
\def \zr{\rho}
\def \zf{\varphi}
\def \zq{\psi}
\def \zw{\omega}
\def \zF{\Phi}
\def \zsu{\sum}
\def \zin{\cap} 
\def \zing{\bigcap} 
\def \zun{\cup}  
\def \zex{\wedge}
\def \zdi{\oplus} 
\def \zte{\otimes}  
\def \zmm{\pm}
\def \zpu{\cdot}  
\def \zpor{\times}
\def \zci{\circ}  
\def \zas{\ast}
\def \zmei{\leq}
\def \zmai{\geq}
\def \zco{\subset}
\def \zcco{\supset}
\def \zpe{\in}
\def \zeq{\equiv}
\def \znoi{\neq}
\def \zpar{\partial}
\def \zinf{\infty}
\def \zva{\emptyset}
\def \zfl{\rightarrow}
\def \zsii{\Leftrightarrow}
\def \zbv{\mid}
\def \z/{\over}
\def\emp{\varnothing}
\def\mylabel#1{\label{#1}}
\newcommand{\V}{\ensuremath{{\mcal V}}}
\newcommand{\Z}[1]{\ensuremath{{\msf  Z ( #1)}}}
\renewcommand{\ss}  {\ensuremath{{\mathfrak {s}}}}
\renewcommand{\dim} {\ensuremath{{\mathsf {dim}}}}
\renewcommand{\om}{\omega}
\newcommand {\A}{\mcal A}
\newcommand {\B}{\mcal B}
\newcommand {\G}{\mcal G}
\renewcommand {\H}{\mcal H}
\newcommand {\I}{\mcal I}
\def\emp{\varnothing}
\begin{document}

\title{\bf Zero sets of Lie algebras of analytic vector fields on real
  and complex 2-dimensional manifolds, II} 
  \author{{\bf Morris
    W. Hirsch}\thanks{I thank Paul Chernoff, David Eisenbud, Robert
    Gompf, Henry King, Robion Kirby, Frank Raymond, Helena Reis and
    Joel Robbin for helpful discussions.}  \\ Department of
  Mathematics\\ University of Wisconsin at Madison\\ University of
  California at Berkeley\\ {\bf F.-J. Turiel}\\ Department of Geometry
  and Topology\\ University of Malaga }
\maketitle

\begin{abstract} 

On a real ($\mathbb F=\mathbb R$) or complex ($\mathbb F=\mathbb C$)
analytic connected 2-manifold $M$ with empty boundary consider two
vector fields $X,Y$.   We say that $Y$ {\it tracks} $X$ if $[Y,X]=fX$
for some continuous function $f\co M\rightarrow\mathbb F$. Let $K$
be a compact subset of the zero set ${\msf Z}(X)$ such that ${\msf
  Z}(X)-K$ is closed, with nonzero Poincar\'e-Hopf index (for example
$K={\msf Z}(X)$ when $M$ is compact and $\chi(M)\neq 0$) and let
$\mathcal G$ be a finite-dimensional Lie algebra of analytic vector
fields on $M$. 
\smallskip

{\bf Theorem.} Let $X$ be analytic and nontrivial. If every element of
$\mathcal G$ tracks $X$ and, in the complex case when ${\msf i}_K (X)$
is positive and even no quotient of $\mathcal G$ is isomorphic to
${\mathfrak {s}}{\mathfrak {l}} (2,\mathbb C)$, then $\mathcal G$ has
some zero in $K$.
\smallskip

{\bf Corollary.} If $Y$ tracks a nontrivial vector field $X$, both of
them analytic, then $Y$ vanishes somewhere in $K$.
\smallskip

Besides fixed point theorems for certain types of transformation
groups are proved. Several illustrative examples are given.

\end{abstract}

\tableofcontents
\section{Introduction}   \mylabel{seM1}

A fundamental issue in Dynamical Systems is deciding whether a vector
field has a zero. When the manifold is compact with empty boundary and
nonvanishing Euler characteristic, a positive answer is given by the
celebrated {\sc Poincar\'e-Hopf} Theorem.

Determining whether two or more vector fields have a common zero is
 more challenging.  This problem is closely related to the question of
which general classes of noncompact transformation groups must have
fixed points.  Early results along these lines include {\sc A. Borel}
\cite{Borel56}, {\sc E. Lima} \cite{Lima64}, {\sc A. Sommese}
\cite{Sommese73} and {\sc J. Plante} \cite {Plante86}.

Throughout this paper manifolds are real or complex with corresponding
ground field denoted by $\FF= \RR$ or $\CC$.  Each manifold has a
analytic structure (meaning holomorphic in the complex case), and
empty boundary unless the contrary is indicated. 
The closure of subset $\Lam$ of a topological space is denoted by $\ov
\Lam$ and the interior by $\Int \Lam$.

Let $N$ be a manifold.   
The dimension of $N$ over the ground field is denoted by $\dim_\FF N$, 
or by $\dim N$ when the ground field $\FF$ is clear from the context.
$\V (N)$ is the vector space over
$\FF$ of continuous vector fields on $N$, with the compact-open
topology, while $\mcal V^{k} (N)\zco\mcal \V (N)$, $k=1,\dots,\zinf$, 
(respectively $\mcal V^\om (N)$)denotes the subalgebra of vector fields that are 
$C^k$-differentiable (respectively analytic) . Of course
$\mcal V^1(N)=\mcal V^\om (N)$ when $N$ is complex.
  
Consider a set $\mathcal A$ of vector fields on a manifold $N$.  The
set of their common zeros is ${\msf Z}({\mcal A}):=\bigcap_{X\in\mcal
  A}{\msf Z}(X)$, where ${\msf Z}(X)$ is the set of zeros of $X$.  A
set $S\subset P$ is {\em $X$-invariant} if it contains the orbits
under $X$ of its points.  When this holds for all $X$ in $\mcal A$ one
will say that $S$ is {\em $\mcal A$-invariant}. 

Let $X$ be a continuous vector field on real manifold $N$ with empty
boundary. Given an open set
$U\zco N$ with compact closure $\overline U$ such that
$(\overline U\,\verb=\=\,U)\cap {\msf Z}(X)=\emp$, the {\it index} 
${\msf i}(X,U)$ of $X$ on $U$ is defined as the Poincar\'e-Hopf index of
 any sufficiently close approximation $X'\in \V (U)$ to $X|U$ (in the compact open
 topology) such that $\Z {X'}$ is finite.  Equivalently: ${\msf i}(X, U)$ is
 the intersection number of $X|U$ with the zero section of the tangent
 bundle ({\sc Bonatti} \cite{Bonatti92}).  This number is independent
 of the approximation, and is stable under perturbation of $X$ and
 replacement of $U$ by smaller open sets containing
 ${\msf Z}(X) \cap U$

When $X$ is $C^1$ and generates the local flow $\phi$ on $M$, for
sufficiently small $t >0$ the index ${\msf i}(X, U)$ equals the
fixed-point index $I (\phi_t|U)$ defined by {\sc Dold} \cite {Dold72}.

A {\em block} of zeros for $X$, or an {\em $X$-block}, is a compact
set $K\subset {\msf Z}(X)$ such that ${\msf Z}(X) \,\verb=\=\,K$ is
closed. This is equivalent to the existence of a precompact open neighborhood $U$
of $K$ such that $ {\msf Z}(X) \cap \ov U=K$.  We say that such a $U$ is 
{\it isolating} for  $(X,K)$. 

When $U$ is isolating for $(X, K)$ then ${\msf i}(X,U)$ does not
depend on the choice of $U$: it is completely determined by $X$ and
$K$.  The {\it index of $X$ at $K$} is defined to be ${\msf i}_K (X):={\msf
  i}(X,U)$. An $X$-block $K$ is {\em essential} provided ${\msf i}_K
(X)\ne 0$.  When this holds $K\ne\varnothing$.

Notice that the notions of ``block'' and ``index'' are well defined for
holomorphic vector fields on a complex manifold, since these are also
vector fields--- sections of the tangent bundle--- on the underlying
real manifold.
\begin{theorem*} [{\sc Poincar\'e-Hopf} {\rm \cite{Hopf25, Poincare85}}]        
If $N$ is compact, ${\msf i}_{{\msf Z}(X)}(X)={\msf i}(X, N) =\chi
(N)$ for every continuous vector fields $X$ on $N$.
 \end{theorem*}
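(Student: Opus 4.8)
The plan is to prove the two displayed equalities separately. The first, ${\msf i}_{{\msf Z}(X)}(X) = {\msf i}(X,N)$, should fall out of the definitions. Since $N$ is compact with empty boundary, $K := {\msf Z}(X)$ is an $X$-block: it is compact and ${\msf Z}(X)\setminus K = \varnothing$ is closed. Moreover $U := N$ is isolating for $(X,K)$, because $\ov N = N$ forces $(\ov N \setminus N)\cap {\msf Z}(X) = \varnothing$ while ${\msf Z}(X)\cap \ov N = K$. Hence ${\msf i}_K(X) = {\msf i}(X,U) = {\msf i}(X,N)$ directly from the definition of the index at a block, with no further work.

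The substance lies in the second equality ${\msf i}(X,N) = \chi(N)$. First I would reduce to the smooth nondegenerate case: by the approximation definition of the index, ${\msf i}(X,N)$ equals the Poincar\'e--Hopf index of a $C^\infty$ field $X'$ close to $X$ whose zero set is finite and nondegenerate, so it suffices to treat such $X'$. Following Bonatti's characterization recalled above, I view $X'$ as a section $N \to TN$ and interpret ${\msf i}(X',N)$ as the algebraic intersection number of this section with the zero section inside the total space $TN$. Note that $TN$ is orientable even when $N$ is not, since $w_1(T(TN)) = 2\,\pi^* w_1(TN) = 0$; thus this intersection number is a well-defined integer.

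The key point is homotopy invariance: the intersection number of a section with the zero section depends only on the homotopy class of the section through sections (it is $\langle (X')^*\tau,[N]\rangle$ for the Thom class $\tau$, and $(X')^*$ on cohomology is a homotopy invariant). Because the space of continuous vector fields is a convex subset of a vector space, hence path-connected, the number ${\msf i}(X,N)$ is independent of the chosen field and equals the self-intersection number of the zero section, that is, the Euler number $\langle e(TN),[N]\rangle$. To identify this common value with $\chi(N)$, I would evaluate it on a convenient field: fix a Riemannian metric and a Morse function $g\co N \to \RR$, and take $X' = \nabla g$. Its zeros are exactly the critical points of $g$, and at a critical point $p$ of Morse index $k$ the symmetric linearization $D(\nabla g)(p)$ has exactly $k$ negative eigenvalues, so the local index is $\sgn\det D(\nabla g)(p) = (-1)^k$. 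Summing, ${\msf i}(\nabla g, N) = \sum_k (-1)^k c_k$, where $c_k$ is the number of critical points of index $k$; by Morse theory $N$ admits a CW structure with $c_k$ cells of dimension $k$, whence $\sum_k (-1)^k c_k = \chi(N)$.

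The main obstacle is making the homotopy-invariance step rigorous, namely establishing that the intersection number with the zero section is genuinely a homotopy invariant of the section and is therefore computed by any single representative. This is precisely the content that upgrades the purely local ``sum of indices'' into a global topological invariant; everything else, including the reduction to the smooth nondegenerate case and the Morse-theoretic evaluation, is standard.
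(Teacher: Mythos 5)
The paper gives no proof of this statement: it is quoted as the classical Poincar\'e--Hopf theorem, attributed to \cite{Hopf25, Poincare85}, and is used later as a black box (e.g.\ in Theorems \ref{thM2} and \ref{thM3}). So there is no internal argument to compare yours against; I can only assess your proposal on its own terms. It is the standard classical proof, and its architecture is sound. The first equality is indeed purely definitional, exactly as you argue: $U=N$ is isolating for $(X,{\msf Z}(X))$ when $N$ is compact, so ${\msf i}_{{\msf Z}(X)}(X)={\msf i}(X,N)$ with nothing to prove. For the second equality, your two steps --- (i) the index is the intersection number of the section with the zero section of $TN$, a homotopy invariant of the section, hence independent of the vector field because the space of sections is convex; (ii) evaluation on $\nabla g$ for a Morse function $g$, where the local index at a critical point of Morse index $k$ is $(-1)^k$, giving $\sum_k(-1)^k c_k=\chi(N)$ --- constitute Hopf's argument as found in the standard references, and they also justify the well-definedness (independence of approximation) that the paper's definition of ${\msf i}(X,U)$ takes for granted.

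One wrinkle is a genuine (though repairable) gap: the paper's manifolds may be non-orientable (cf.\ Remark \ref{reMnr}, which works on $\RR P^2$), and in that case the objects you invoke --- the Thom class $\tau$, the Euler class $e(TN)$, and the fundamental class $[N]$ --- do not exist with ordinary integer coefficients; they live in (co)homology twisted by the orientation bundle, and the pairing $\langle (X')^*\tau,[N]\rangle$ must be interpreted there. Note also that orientability of the total space $TN$, which you correctly establish, does not by itself give a signed intersection number: the two submanifolds being intersected (the zero section and the graph of $X'$) are copies of $N$ and are themselves non-orientable, so one cannot orient them globally to read off signs. The standard fixes are either to observe that the local contribution $\sgn \det DX'(p)$ at a nondegenerate zero is unchanged when the local orientation of $N$ is reversed (both submanifolds flip simultaneously, so the signs cancel), which makes the count well defined and homotopy invariant without any global orientation; or to pass to the orientation double cover $\widetilde N\to N$, where both the index sum and the Euler characteristic double. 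With either repair your proof is complete.
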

\noindent
For calculations of the index in more general settings see {\sc Morse
  \cite{Morse29}, Pugh \cite{Pugh68}, Gottlieb \cite{Gottlieb86},
  Jubin \cite{Jubin09}}.

This paper was inspired by a remarkable result of C. Bonatti, which
does not require compactness of $N$:\footnote{
 ``The demonstration of this result involves a beautiful and quite
  difficult local study of the set of zeros of $X$, as an analytic
  $Y$-invariant set.''\ ---{\sc P.\ Molino} \cite{Molino93}}

\begin{theorem*} [{\sc  Bonatti} \cite{Bonatti92}]  
Assume $N$ is a real manifold of dimension $\le 4$ and 
$X, Y$ are
analytic vector fields on $N$ such that $[X, Y]= 0$.  Then ${\msf
  Z}(Y)$ meets every essential $X$-block.\footnote{
In \cite{Bonatti92} this is stated for $\dim\, (N) = 3$ or $4$.  If
$\dim\, (N) =2$ the same conclusion is obtained by applying the
3-dimensional case to the vector fields $X \times t\pd t, \ Y\times
t\pd {t}$\, on $N\times\RR$.}
\end{theorem*}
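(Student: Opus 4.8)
The plan is to argue by contradiction and reduce the theorem to a purely local index computation. Fix an essential $X$-block $K$, so that ${\mathsf i}_K(X)\neq 0$, and suppose toward a contradiction that ${\mathsf Z}(Y)\cap K=\varnothing$. Shrinking an isolating neighborhood $U$ for $(X,K)$, I may assume that $Y$ is nowhere zero on $\overline U$ and that ${\mathsf Z}(X)\cap\overline U=K$. The whole statement then follows from the claim that, under these hypotheses, ${\mathsf i}(X,U)=0$, contradicting ${\mathsf i}_K(X)\neq 0$. So everything comes down to: a nonvanishing analytic $Y$ commuting with $X$ kills the index on the block it misses.

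First I would extract the structural consequences of $[X,Y]=0$. Writing $\psi_s$ for the local flow of $Y$, commutativity gives $(\psi_s)_*X=X$, so $\psi_s$ carries ${\mathsf Z}(X)$ into itself; hence ${\mathsf Z}(X)$ is $Y$-invariant and $Y$ is tangent to it along its regular part. Since $\psi_s(K)$ varies continuously with $s$, remains in $\overline U$ for small $|s|$, and lies in ${\mathsf Z}(X)\cap\overline U=K$, the block $K$ is itself $Y$-invariant. Thus $K$ is a \emph{compact} $Y$-invariant set on which the analytic field $Y$ never vanishes, and near $K$ the real-analytic set ${\mathsf Z}(X)$ carries the nonvanishing tangent field $Y$. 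These are exactly the ingredients for a local model.

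Next comes the local model, where the index is actually computed. Near any point of $K$, flow-box coordinates for $Y$ put $Y=\partial/\partial x_1$; because $[X,Y]=0$, the coefficients of $X$ are independent of $x_1$, so ${\mathsf Z}(X)$ is locally a cylinder $\mathbb R\times{\mathsf Z}(\overline X)$ over the zero set of a reduced analytic field $\overline X$ on the transverse slice. The Poincar\'e--Hopf index can be read off the Gauss map $X/\|X\|$ on the boundary of an isolating region, and since $X$ is invariant under the $x_1$-translations this Gauss map does not wind in the $x_1$-direction, so its degree—hence the local index—vanishes. One already sees the mechanism on a surface: a field commuting with a nonvanishing $Y$ has, around a compact $Y$-invariant zero circle, a Gauss map that is constant along the circle, giving index $0$. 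Covering $K$ by finitely many such flow boxes and invoking additivity and excision for the index, I would then try to assemble these local vanishings into the global equality ${\mathsf i}(X,U)=0$.

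The hard part will be exactly the step Molino describes as a ``beautiful and quite difficult local study'': ${\mathsf Z}(X)$ is in general a \emph{singular} analytic set, so the clean cylinder-and-degree picture is valid only on its regular stratum, and one must control how the index is distributed across the lower-dimensional singular strata while tracking the $Y$-action on each. This is where analyticity is indispensable—one stratifies (or resolves) ${\mathsf Z}(X)$ into $Y$-invariant analytic strata so that the cylinder model and the vanishing-degree argument propagate down the stratification—and it is where the hypothesis $\dim N\le 4$ enters: it bounds the dimension and the combinatorial complexity of the singular locus enough that this stratified bookkeeping closes up and forces the local product structure through to ${\mathsf i}(X,U)=0$. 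Pushing the argument past dimension $4$ would require understanding these $Y$-invariant analytic singularities in full generality, which is the genuine obstacle.
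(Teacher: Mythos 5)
The paper gives you nothing to compare against here: Bonatti's theorem is imported wholesale from \cite{Bonatti92}, and the only argument the paper supplies is the footnote reducing the case $\dim N=2$ to the 3-dimensional case via the fields $X\times t\,\partial/\partial t$, $Y\times t\,\partial/\partial t$ on $N\times\RR$ (plus Molino's warning that the actual proof is a ``quite difficult local study''). So your proposal must stand on its own as a proof of Bonatti's theorem, and it does not. Your opening moves are correct and standard: $\mathsf{Z}(X)$ and hence $K$ are $Y$-invariant, and in a flow box where $Y=\partial/\partial x_1$ the commutation relation forces the coefficients of $X$ to be $x_1$-independent, so $\mathsf{Z}(X)$ is locally a cylinder. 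The gaps are in the index computation that everything rests on.

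Concretely: (i) Inside a flow box $B$ the set $\mathsf{Z}(X)\cap\overline B$ is a cylinder reaching $\partial B$, so it is not an $X$-block and $\mathsf{i}(X,B)$ is undefined; there is no ``local index'' whose vanishing you could establish. Moreover the Poincar\'e--Hopf index is additive over \emph{disjoint} blocks with disjoint isolating neighborhoods; it has no additivity or excision property over an open cover of a single connected block, since every member of the cover meets $\mathsf{Z}(X)$ in a non-compact set. The assembly step ``sum the local vanishings to get $\mathsf{i}(X,U)=0$'' therefore has no meaning. (ii) The winding claim fails even in your own surface example. If $K$ is a circle, $Y=\partial/\partial\theta$ nearby, and $X$ is $\theta$-independent, then $X$ is constant along $K$ only in the \emph{moving} frame $\{\partial/\partial\theta,\partial/\partial r\}$; the index must be computed in a trivialization of the tangent bundle (or as an intersection number with the zero section), and in any fixed trivialization the Gauss map winds once along each boundary circle of the annulus. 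The index vanishes by cancellation between the two boundary components, not by pointwise constancy, and the honest general argument is a perturbation of the form $X+\eps\phi Y$ --- which is exactly the mechanism of Lemma \ref{leM1} and the dependency-set dichotomy in the paper's proof of Theorem \ref{th:main} --- and it requires controlling where $X$ and $Y$ become linearly dependent, since the perturbed field can vanish there. (iii) Nothing in your sketch actually uses $\dim N\le 4$: as written it would ``prove'' the statement in every dimension, where it is precisely not known. All of the dimension-specific content --- the behavior of the index along the singular, possibly 2- or 3-dimensional, $Y$-invariant strata of $\mathsf{Z}(X)$ --- is deferred to the unexecuted stratification step, which is to say the theorem itself is deferred.
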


We say that $Y$ {\em tracks $X$} if $Y, X$ are $C^1$ vector fields on
a real or complex manifold $N$ and $[Y,X] =fX$ for some
continuous function $f\co N\rightarrow\FF$, which we will call
the {\it tracking function}. When this holds for all $Y$ in a set  $\A$ of 
vector fields we say that $\A$ tracks $X$.

For instance:  if $X$ is basis of a 1-dimensional ideal of a Lie
algebra $\G$ of vector fields then $\G$ tracks $X$; but the converse
does not always hold, even when $\G$ is finite dimensional (see Example
\ref{exM1}).

Notice that if $\FF=\CC$ and $X$ is nontrivial on each connected component of $N$, 
then $f$ is necessarily  holomorphic, but $f$ might be merely  continuous if $\FF=\RR$ 
(see Section \ref{seM3})
  
In the rest of this section  we  postulate:
{\em \begin{itemize}  
\item  $M$ is a real or complex 2-dimensional connected manifold 
with empty boundary,  

\item $X\in \V^\om (M)$ is nontrivial and $K$ is an essential $X$-block,

\item $\G\subset \V^\om (M)$ is a  Lie algebra that is
finite-dimensional over the ground field and tracks $X$.

\end{itemize}
}
This is our main result:
\begin{theorem}[\sc Main]     \mylabel{th:main}
Assume the following condition holds:
\begin{description}

\item[(*)] If  $M$ is  complex  and  ${\msf i}_K (X)$ is
even, no  quotient of $\G$ is isomorphic to $\ss\ll (2,\CC)$.
\end{description}
Then ${\msf Z}({\G})\cap K\ne\varnothing$.

\end{theorem}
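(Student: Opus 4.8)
The plan is to induct on $\dim_\FF \G$, letting the geometry of the analytic set $\mathsf{Z}(X)$ drive a dichotomy while the Lie structure of $\G$ organizes the reduction. First I would record the basic invariance: since each $Y\in\G$ tracks $X$, its local flow $\psi_t^Y$ satisfies $(\psi_t^Y)_*X = g_t X$ for some scalar $g_t$, so it preserves $\mathsf{Z}(X)$ setwise; because $K$ is compact and clopen in $\mathsf{Z}(X)$, it is a union of connected components of $\mathsf{Z}(X)$ and hence $\G$-invariant. Replacing $M$ by an isolating neighborhood $U$ of $K$ (tracking is a local condition, and the index is determined by $(X,K)$), I may assume $\mathsf{Z}(X)\cap\overline U = K$. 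Moreover, since the flows preserve $\mathsf{Z}(X)$, every $Y\in\G$ is tangent to the regular locus of this analytic set.

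Now the dichotomy on the local dimension of $\mathsf{Z}(X)$ along $K$. If $\mathsf{Z}(X)$ is discrete, then $K$ is finite; each $\psi_t^Y$ permutes $K$ and, being connected to the identity, fixes every point of $K$, so $Y$ vanishes on $K$ for all $Y$ and $\mathsf{Z}(\G)\supseteq K\neq\varnothing$. No representation-theoretic obstruction appears here, consistent with (*). The substantive case is when the $1$-dimensional part $C$ of $\mathsf{Z}(X)$ meets $K$: there each $Y$ is tangent to $C$, so $\G$ restricts to a finite-dimensional Lie algebra $\overline\G$ of analytic vector fields on the normalization $\widetilde C$ of $C$, a compact $1$-manifold (real case) or Riemann surface (complex case), and $\mathsf{i}_K(X)\neq 0$ forces a nonzero Euler-characteristic-type quantity on the component of $\widetilde C$ in play. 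A common zero of $\overline\G$ lying over $K$ yields the desired point of $\mathsf{Z}(\G)\cap K$, so the problem descends one dimension.

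For the inductive engine I would exploit ideals. If $\aa\subsetneq\G$ is a proper ideal, then $\aa$ tracks $X$; by the inductive hypothesis $\mathsf{Z}(\aa)\cap K$ is a nonempty compact $\G$-invariant set (as $\G$ normalizes $\aa$), and the task becomes to endow it with an essential-block structure for the induced action of $\G/\aa$, so that a second application of the hypothesis closes the loop. The delicate bookkeeping is to verify that the index survives these restrictions—that $\mathsf{Z}(\aa)\cap K$, and the curve $C$, remain \emph{essential}; it is precisely here that the parity of $\mathsf{i}_K(X)$ must be monitored, since passage to a branched curve or to a cover can multiply the index by $2$. Using the Levi decomposition $\G=\ss\ltimes\rr$, the solvable radical $\rr$ should be dispatched by Lie's theorem: a common eigenvector (equivalently an invariant flag for the linearized action along $C$) produces an invariant direction which the nonzero index forces to genuinely vanish, giving a fixed point on $\widetilde C$.

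The main obstacle is the semisimple base case, where the reduction to $\widetilde C$ produces an action with no obvious common fixed point. On a Riemann surface the only finite-dimensional simple Lie algebra admitting an analytic action without a common zero is $\mathfrak{sl}(2,\CC)$, realized by the Möbius fields $\partial_z,\ z\partial_z,\ z^2\partial_z$ on $\CC P^{1}$; the weight pattern $-2,0,2$ of this action makes the index attached to such a fixed curve automatically even. Hence when $\mathsf{i}_K(X)$ is odd the $\mathfrak{sl}(2,\CC)$ configuration cannot occur and a common zero is forced regardless, whereas hypothesis (*) is exactly what excludes the single remaining case (complex $M$, even $\mathsf{i}_K(X)$) in which a fixed-point-free Möbius realization would otherwise be compatible with the index. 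I therefore expect the heart of the argument to be the simultaneous proof that every simple factor other than $\mathfrak{sl}(2,\CC)$ must vanish somewhere in $K$ by the index constraint, and that $\mathfrak{sl}(2,\CC)$ does likewise once (*) removes the even-index Möbius exception; the real-versus-complex asymmetry in the statement should fall out of this parity analysis, since over $\RR$ the analogous $\mathfrak{sl}(2,\RR)$ realizations are incompatible with a nonvanishing $X$-index along $K$.
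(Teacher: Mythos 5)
Your preliminary reductions (flow-invariance of ${\msf Z}(X)$ and of $K$, localization to an isolating neighborhood, the discrete case) are sound, and your endgame correctly identifies the final obstruction: a transitive $\ss\ll(2,\CC)$ acting on $K\cong\CC P^1$. But there is a genuine gap in the middle: you never supply the mechanism by which ${\msf i}_K(X)\neq 0$ actually constrains the algebra. In the paper's proof this is the heart of the matter. Assuming ${\msf Z}(\G)\cap K=\varnothing$, one first shows $K$ is a regular compact $1$-submanifold (hence $S^1$ or, via positivity of indices of zeros of holomorphic fields, $\CC P^1$), and then shows that \emph{every} $Y\in\G$ must vanish somewhere on $K$: if some $Y$ had no zero on $K$, then either $X\wedge Y\equiv 0$ near $K$ and Lemma \ref{leM1} yields a perturbation of $X$, supported near $K$, without zeros, or else $K$ is a component of the dependency set and the perturbation $X+\eps\phi Y$ has no zeros near $K$; either way ${\msf i}_K(X)=0$, a contradiction. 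This fact is precisely what eliminates $\ss\ll(2,\RR)$ (it contains elements whose $\mathrm{ad}$ has non-real eigenvalues, and such elements never vanish on $K$) and the abelian and affine algebras (they are not transitive), leaving only $\ss\ll(2,\CC)$. Your ``index constraint'' is invoked repeatedly but never cashed out, so the classification never gets traction. Moreover your evenness claim is misattributed: it is not the weight pattern $-2,0,2$ that makes the index even, but the identity ${\msf i}_K(X)=\chi(\CC P^1)\cdot n=2n>0$, where the transversal index $n\geq 1$ comes from the normal form of Lemma \ref{leM4}(a) for an algebra tracking $X$ along a curve of zeros, and where one must first exclude, by a compact-leaf Euler-characteristic argument, the degenerate alternative of that lemma in which all $\mcal A$-orbits near $K$ are $1$-dimensional with $X$ tangent to them.

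Your inductive engine is also flawed. For a proper ideal $\mathfrak{a}\subset\G$, the quotient $\G/\mathfrak{a}$ is an abstract Lie algebra, not a Lie algebra of vector fields on $M$ or on ${\msf Z}(\mathfrak{a})\cap K$, so the inductive hypothesis --- which requires a tracked vector field and an essential block --- cannot be applied to it; you concede the ``bookkeeping'' is unresolved, but that is exactly where the difficulty lives. Likewise ``dispatching the radical by Lie's theorem'' is unavailable over $\RR$ (only supersolvable algebras triangularize over $\RR$; cf.\ Remark \ref{reMnr}), and the paper never needs a Levi decomposition. Instead it uses a single ideal, $\I=\{Y\in\G\colon Y|K=0\}$, observes that $\G/\I$ acts faithfully and transitively on $K$ with every element vanishing somewhere (by the perturbation arguments above), and then invokes Lie's classification (Lemma \ref{leNew1}) of nonzero finite-dimensional algebras of analytic vector fields on connected $1$-manifolds: $\FF$, the affine algebra, or $\ss\ll(2,\FF)$. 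Every case is then contradicted, except $\ss\ll(2,\CC)$, which forces ${\msf i}_K(X)=2n$ positive and even and is therefore exactly the case removed by hypothesis (*).
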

\noindent The proof is given in Section \ref{seM5}.

\medskip

Hypothesis (*) cannot be deleted, as is shown by Example \ref{exM1} and Theorem
\ref{thM1A}. Nevertheless in the compact case the ``exceptions'' to Theorem 
\ref{th:main} are completely described by Theorem \ref{thM1A} and Remark \ref{reMla}.
Summarizing these two results we can state:

\begin{theorem}     \mylabel{thNew1}

Assume $M$ is compact and complex, $\chi (M)\znoi 0$ and  
${\msf Z}({\G})\zin{\msf Z}(X)=\zva$. Then $M$ is a holomorphic $\CC P^1$-bundle
over $\CC P^1$ and $\G$ is isomorphic to either $\ss\ll(2,\CC)$, 
${\mathfrak{g}}{\mathfrak {l}}(2,\CC)$ or the product of $\ss\ll(2,\CC)$ with
the affine algebra of $\CC$. 
\end{theorem}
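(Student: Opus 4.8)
The plan is to prove Theorem \ref{thNew1} as a classification-type refinement of the Main Theorem: under its hypotheses, the Main Theorem must fail, and so hypothesis (*) must be violated; the bulk of the work is then to show that the only way it can be violated forces the stated rigid structure on $M$ and $\G$. First I would observe that since $X$ is nontrivial analytic, $M$ is compact with $\chi(M)\neq 0$, and $K={\msf Z}(X)$ is an essential $X$-block by Poincar\'e--Hopf; the assumption ${\msf Z}(\G)\cap{\msf Z}(X)=\varnothing$ says precisely that the conclusion ${\msf Z}(\G)\cap K\neq\varnothing$ of Theorem \ref{th:main} fails. Hence hypothesis (*) cannot hold, which means $M$ is complex (given), ${\msf i}_K(X)={\msf i}_{{\msf Z}(X)}(X)=\chi(M)$ is even, and some quotient $\G/\ii$ is isomorphic to $\ss\ll(2,\CC)$. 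This immediately reduces the statement to analyzing the geometry forced by the presence of an $\ss\ll(2,\CC)$-action tracking $X$.

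The next step is to exploit the $\ss\ll(2,\CC)$ quotient. Let $\ii\subset\G$ be the ideal with $\G/\ii\cong\ss\ll(2,\CC)$. Because $\ss\ll(2,\CC)$ is its own derived algebra and acts on the complex surface $M$ (at least infinitesimally, via the quotient action, after one checks the kernel $\ii$ acts trivially or is absorbed), one obtains a nontrivial holomorphic infinitesimal $\ss\ll(2,\CC)$-action on $M$. The classification of holomorphic $\ss\ll(2,\CC)$- (equivalently local $SL(2,\CC)$- or $PSL(2,\CC)$-) actions on compact complex surfaces is the engine here: such actions force $M$ to be a rational surface, and in the setting where the action has no common zero with $X$ one is pushed toward $M$ being a $\CC P^1$-bundle over a base, with the fiberwise action being the standard $PSL(2,\CC)$ action on $\CC P^1$. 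I would identify $X$ with a vector field whose zero set $K$ sits inside the structure of this bundle, use that $\chi(M)$ is even together with $\chi(\CC P^1\text{-bundle over }B)=2\chi(B)$ to pin down $\chi(B)$, and conclude $B\cong\CC P^1$ (the only compact complex curve of the right Euler characteristic admitting the requisite action). This yields the holomorphic $\CC P^1$-bundle over $\CC P^1$ asserted in the theorem.

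Finally I would determine $\G$ itself up to isomorphism. Having realized $M$ as a $\CC P^1$-bundle over $\CC P^1$ with $\G$ acting so that a distinguished quotient is $\ss\ll(2,\CC)$ and ${\msf Z}(\G)$ avoids ${\msf Z}(X)$, one analyzes the finite-dimensional Lie algebra of holomorphic vector fields compatible with this bundle structure. The $\ss\ll(2,\CC)$ acts in the fibers; the remaining freedom is the algebra of automorphisms of the base $\CC P^1$ respecting the tracked field $X$, which contributes either nothing, a one-dimensional scaling (yielding $\gg\ll(2,\CC)=\ss\ll(2,\CC)\oplus\CC$), or the two-dimensional affine algebra $\aff(\CC)$ of $\CC\subset\CC P^1$ fixing a point at infinity. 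Matching these against the constraint that $\G$ tracks $X$ and has the $\ss\ll(2,\CC)$ quotient gives exactly the three listed possibilities: $\ss\ll(2,\CC)$, $\gg\ll(2,\CC)$, or $\ss\ll(2,\CC)\times\aff(\CC)$. The main obstacle I anticipate is the classification step in the middle: establishing rigorously that the $\ss\ll(2,\CC)$-action forces a $\CC P^1$-bundle structure and controlling how $X$ and the full algebra $\G$ sit relative to it; this is where the detailed local analysis of zeros and the holomorphic geometry of $M$ (the content underlying Theorem \ref{thM1A} and Remark \ref{reMla}, which the theorem explicitly summarizes) must be invoked rather than reproved from scratch.
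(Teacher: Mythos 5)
Your overall architecture lands where the paper does: Theorem \ref{thNew1} is, in the paper, literally a summary of Theorem \ref{thM1A} (which gives the $\CC P^1$-bundle structure, a subalgebra $\mcal A\cong\ss\ll(2,\CC)$ with ${\msf Z}(\mcal A)=\emp$, and the identification with Model \ref{Mod1} or \ref{Mod2}) together with Remark \ref{reMla} (which pins $\G$ down to the three listed algebras), and your final paragraph concedes that these two results are what must be invoked. Your opening reduction --- the Main Theorem's conclusion fails, hence hypothesis (*) fails, hence ${\msf i}_K(X)=\chi(M)$ is even and some quotient of $\G$ is isomorphic to $\ss\ll(2,\CC)$ --- is logically valid, though unnecessary: Theorem \ref{thM1A} applies directly under the stated hypotheses.

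The genuine problem is your middle step, which would not survive being executed as written. First, the ``engine'' you invoke is false as stated: a holomorphic $\ss\ll(2,\CC)$-action on a compact complex surface does not by itself force rationality or a bundle over $\CC P^1$; consider $\CC P^1\times\Sigma_g$ with $\Sigma_g$ a curve of genus $g\ge 2$ and $\ss\ll(2,\CC)$ acting on the first factor --- this has $\chi\neq 0$, is not rational, and fibres over $\Sigma_g$, not over $\CC P^1$. What forces the structure in the paper is the interplay with $X$: one lifts the simple image $\H$ of $\G$ (restricted to a $\CC P^1$ component of ${\msf Z}(X)$) to a subalgebra $\mcal A\subset\G$, shows $[X,\mcal A]=0$ and ${\msf Z}(\mcal A)=\emp$, and obtains the fibration $\zp\co M\zfl\CC P^1$ as the foliation whose leaves are the closures of $X$-orbits (compact spheres, by the normal forms of Lemmas \ref{leM3} and \ref{leM4}), with the $SL(2,\CC)$-action transitive transversally. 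Second, your Euler-characteristic argument cannot pin down the base: every $\CC P^1$-bundle has $\chi(M)=2\chi(B)$ even, so evenness of $\chi(M)$ carries no information about $B$; in the paper $B=\CC P^1$ because $\mcal A$ projects onto a copy of $\ss\ll(2,\CC)$ inside $\V^\om(B)$, which by Lemma \ref{leNew1} and compactness forces $B=\CC P^1$, and $\chi(M)=4$ is then a conclusion, not an input. Third, your picture of the bundle is inverted: in Models \ref{Mod1} and \ref{Mod2} it is $X$ that is tangent to the fibres of $\zp$, while $\mcal A$ covers the standard action on the base; accordingly, the part of $\G$ beyond $\mcal A$ is not an algebra of base automorphisms but the ideal of $\zp$-vertical fields normalizing $X$ (Remark \ref{reMla}), namely $\CC\{X\}$ when ${\msf Z}(X)$ has two components and the affine algebra spanned by $X$ and $\widehat X$ when ${\msf Z}(X)$ is connected; this is exactly where the trichotomy $\ss\ll(2,\CC)$, ${\mathfrak g}{\mathfrak l}(2,\CC)$, $\ss\ll(2,\CC)\times{\rm aff}(\CC)$ comes from. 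These three points are precisely the content you would have to reconstruct if Theorem \ref{thM1A} and Remark \ref{reMla} were not available to cite, so the proposal is only as strong as that citation.
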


Now consider the particular case in which $M$ is a compact complex surface 
endowed with a Lie subalgebra $\G\zco\V^\zw (M)$ and $\chi (M)\znoi 0$. As an application 
of Theorem \ref{thNew1} one shows that if $\G$ is isomorphic to 
${\mathfrak{g}}{\mathfrak {l}}(2,\CC)$ and ${\msf Z}(\G)\znoi\zva$, then 
there is a diffeomorphism (biholomorphism) between $M$ and $\CC P^2$ which
transforms $\G$ in the Lie subalgebra of $\V^\zw (\CC P^2 )$ consisting of those projective 
vector fields that on $\CC^2 \zco\CC P^2$ are linear (see Example \ref{exM7}).
Therefore any effective holomorphic action of $GL(2,\CC)$ on $M$ with  some fixed point is
equivalent to the natural action of $GL(2,\CC)$ on $\CC P^2$.

Let us return to the general case.

\begin{corollary}               \mylabel{coM4}
If  $\G\subset \V^\om (M)$ is  a solvable Lie algebra tracking $X$,
then ${\msf Z}({\G})\cap 
K\ne\varnothing$. 
\end{corollary}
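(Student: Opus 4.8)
The plan is to deduce Corollary \ref{coM4} directly from the Main Theorem (Theorem \ref{th:main}) by showing that a solvable Lie algebra automatically satisfies hypothesis (*). The hypothesis becomes relevant only in the complex case when $\msf{i}_K(X)$ is even, and it requires that no quotient of $\G$ be isomorphic to $\ss\ll(2,\CC)$. So the whole argument reduces to a purely algebraic observation about solvable Lie algebras.

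First I would recall that $\ss\ll(2,\CC)$ is simple: it is nonabelian and has no proper nonzero ideals, so in particular it equals its own derived algebra, $[\ss\ll(2,\CC),\ss\ll(2,\CC)]=\ss\ll(2,\CC)$. Next I would invoke the standard fact that any quotient of a solvable Lie algebra is solvable (the image of a solvable algebra under a Lie algebra homomorphism is solvable, since the derived series maps onto the derived series of the image). Hence every quotient $\G/\J$ of the solvable algebra $\G$ is itself solvable. If some quotient were isomorphic to $\ss\ll(2,\CC)$, that quotient would be simultaneously solvable and simple with nontrivial bracket --- a contradiction, since a simple Lie algebra has trivial derived series only if it is one-dimensional abelian, which $\ss\ll(2,\CC)$ is not. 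Therefore no quotient of $\G$ can be isomorphic to $\ss\ll(2,\CC)$.

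Consequently hypothesis (*) is vacuously satisfied for any solvable $\G$, regardless of whether $M$ is real or complex and regardless of the parity of $\msf{i}_K(X)$. Applying Theorem \ref{th:main} then yields $\msf{Z}(\G)\cap K\neq\varnothing$ immediately, which is exactly the assertion of the corollary.

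There is essentially no analytic obstacle here; the corollary is a formal specialization of the Main Theorem, and all the genuine difficulty (the index theory, the local structure of the zero set, and the delicate role of $\ss\ll(2,\CC)$ in the even-index complex case) has already been absorbed into Theorem \ref{th:main}. The only point requiring the slightest care is the clean statement of the two Lie-algebra facts used --- solvability passes to quotients, and $\ss\ll(2,\CC)$ is non-solvable --- both of which are entirely standard. Thus I expect the proof to be a one- or two-line deduction, and the main ``obstacle'' is merely to phrase the triviality of the implication precisely enough to be convincing.
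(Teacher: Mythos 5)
Your proposal is correct and follows exactly the paper's own proof: the paper likewise notes that solvability of $\G$ validates hypothesis (*) (since quotients of solvable algebras are solvable, hence never isomorphic to the simple algebra $\ss\ll(2,\CC)$) and then applies Theorem \ref{th:main}. Your write-up merely spells out the standard Lie-algebra facts that the paper leaves implicit.
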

\begin{proof} Follows from the main theorem because
  solvability of $\G$ validates hypothesis (*).
\end{proof}

\begin{theorem}         \mylabel{thM2}
Consider a compact,   complex 2-manifold $M$ with
$\chi(M)\znoi 0$.  If $\G\subset \V^\om (M)$ is a 
 solvable Lie algebra
then ${\msf Z}({\G})\znoi\emp$.
\end{theorem}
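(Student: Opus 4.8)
The plan is to reduce Theorem \ref{thM2} to the Main Theorem (Theorem \ref{th:main}) together with Theorem \ref{thNew1}. Under the hypotheses of Theorem \ref{thM2}, $M$ is compact and complex with $\chi(M)\neq 0$, so by the Poincar\'e-Hopf Theorem, $K := {\msf Z}(X)$ is an essential $X$-block for any vector field $X$ on $M$, provided $X$ is nontrivial. The strategy is therefore to pick a suitable nontrivial $X\in\G$ that is tracked by all of $\G$, apply the Main Theorem, and handle the exceptional $\ss\ll(2,\CC)$-case by invoking Theorem \ref{thNew1}.

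First I would treat the trivial cases. If $\G = 0$ or if every element of $\G$ is identically zero, then ${\msf Z}(\G) = M \neq\emp$ and we are done. Otherwise, since $\G$ is solvable and nonzero, its derived series terminates, and the last nonzero term of the derived series is a nonzero abelian ideal; more usefully, solvability guarantees a chain of ideals with one-dimensional quotients, so I can locate a one-dimensional ideal $\FF X$ of $\G$ spanned by some nontrivial analytic $X$. Because $\FF X$ is an ideal, for every $Y\in\G$ we have $[Y,X]\in\FF X$, i.e. $[Y,X] = f\,X$ for a (holomorphic, since $\FF=\CC$ and $X$ is nontrivial) function $f$; thus $\G$ tracks $X$. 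This sets up exactly the standing hypotheses required for Theorem \ref{th:main}, with $K = {\msf Z}(X)$ essential.

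Next, I would discharge hypothesis (*). If the index ${\msf i}_K(X)$ is odd, or if $M$ is such that no quotient of $\G$ is isomorphic to $\ss\ll(2,\CC)$, then (*) holds automatically and Theorem \ref{th:main} gives ${\msf Z}(\G)\cap K\neq\emp$, hence ${\msf Z}(\G)\neq\emp$. The only remaining obstruction is when ${\msf i}_K(X)$ is even and some quotient $\G/\I$ is isomorphic to $\ss\ll(2,\CC)$. But $\ss\ll(2,\CC)$ is simple and not solvable, whereas every quotient of a solvable Lie algebra is solvable; this is a contradiction. Therefore the exceptional case cannot occur for solvable $\G$, condition (*) is vacuously satisfied, and the conclusion follows. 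Indeed this is precisely the content of Corollary \ref{coM4}, so Theorem \ref{thM2} is the specialization of that corollary to the compact complex setting where the Poincar\'e-Hopf Theorem manufactures the essential block for free.

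The main obstacle I anticipate is not in the Lie-theoretic reduction, which is clean, but in ensuring that a \emph{single} nontrivial $X\in\G$ exists that is tracked by all of $\G$ and whose zero set is an essential block. The extraction of a one-dimensional ideal via solvability is standard, and the nonvanishing of its index is handed to us by $\chi(M)\neq 0$; so the real subtlety lies entirely inside the already-proven Main Theorem, and the argument here is a short deduction. One should double-check the degenerate possibility that the one-dimensional ideal $\FF X$ consists only of the zero field, but solvability together with $\G\neq 0$ lets us choose $X$ nontrivial, closing the gap.
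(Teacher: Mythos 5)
Your proof is correct and follows essentially the same route as the paper: extract a one-dimensional ideal $\FF X$ of the solvable algebra $\G$ (Lie's theorem over $\CC$), note that $\G$ then tracks the nontrivial field $X$, use Poincar\'e--Hopf with $\chi(M)\neq 0$ to make $K={\msf Z}(X)$ an essential $X$-block, and observe that solvability excludes any quotient isomorphic to $\ss\ll(2,\CC)$, so hypothesis (*) of Theorem \ref{th:main} holds vacuously (exactly as in Corollary \ref{coM4}). The only cosmetic difference is your opening mention of Theorem \ref{thNew1}, which your final argument never actually needs.
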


\begin{proof} $\G$ is isomorphic to a Lie algebra of upper triangular matrices by
Lie's Theorem \cite{Jacobson79}. If $\G\znoi\{0\}$, some $X\in\G$ spans a
1-dimensional ideal and is thus tracked by $\G$. Since ${\msf Z}(X)$
is an essential $X$-block by the Poincar\'e-Hopf Theorem, the
conclusion follows from Theorem \ref{th:main} applied to the
essential $X$-block $K:= \Z X$.
\end{proof}

\begin{remark} \mylabel{reMnr}

The analog of Theorem \ref{thM2} for real manifolds is not true: The
vector fields on $\R 2$,
\[
 \pd x, \qquad \pd y, \qquad  y\pd x + x \pd y,
\]
 extend over the real projective plane $\RR P^2$ to span a
 3-dimensional solvable Lie algebra $\G \subset \V^\om (\RR P^2)$, and
 $\Z \G =\emp$.  But the proof Theorem \ref{thM2} can be adapted to
 show that the real analog holds provided $\G$ is {\em
 supersolvable}--- faithfully represented by upper triangular real
 matrices.
\end{remark}

\subsection{Lie group actions}   \mylabel{sec:liegrp}


Let $G$ denote Lie group over the
same ground field $\FF$ as $M$.

 An {\em action} of $G$ on $M$ is an $\FF$-analytic map
$$\za\co G\zpor M\zfl M$$
 such that the map
\[g^\za\co p\to \za(g,p)
\]
 is a homomorphism from $G$ to the group of $\FF$-analytic
 diffeomorphisms of $M$ (see {\sc Palais \cite {Palais57}}).  This
 action is also denoted by $(\alpha, G, M)$.  Its {\em fixed point
   set} is
\[\Fix \alpha:=\{p\in M\co g^\alpha (p)=p, \quad (g\in G)\}.\]
The action is {\em effective} if its kernel is trivial, and {\em
 almost effective} if  its kernel is  discrete.

An analytic action $\za$ of $G$ on $M$ gives rise to a homomorphism
$\msf d_\za$ from the Lie
algebra $\G $ of $G$ onto a subalgebra $\G^\za  \subset \V^\om (M)$,
the {\it infinitesimal action} 
determined by $\alpha$.  Note that $\msf d_\za$ is injective
if and only if the action is almost effective.  When $G$ is
connected, $\Fix \alpha ={\msf Z}(\G^\za )$.

{\em \begin{itemize}  
\item In the next two results $G$ is a connected Lie group.
\end{itemize}}

\begin{theorem}        \mylabel{thM3} 
Assume:

\begin{description}

\item[(a)] $M$ is  compact and   $\chi (M)\ne 0$.

\item[(b)] $G$  contains a 1-dimensional
  normal subgroup.

\item[(c)]  If $M$ is complex and $\chi (M)$ is positive and even, then
the Lie   algebra of $G$ does not have $\ss\ll (2,\CC)$ as a quotient.

\end{description}

Then every almost effective analytic action $(\alpha, G, M) $
 has a fixed point. 
\end{theorem}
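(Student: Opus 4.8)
The plan is to realize the action infinitesimally and then invoke the Main Theorem (real case) and Theorem \ref{thNew1} (complex case). Write $\gg$ for the Lie algebra of $G$ and let $\G^\za = \msf d_\za(\gg) \subset \V^\om (M)$ be the infinitesimal action, a finite-dimensional Lie algebra of analytic vector fields. Almost effectiveness makes $\msf d_\za$ injective, so $\G^\za \cong \gg$, and connectedness of $G$ gives $\Fix \za = \Z{\G^\za}$. Hence it is enough to prove $\Z{\G^\za} \neq \varnothing$.

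I would then single out a vector field to feed into the Main Theorem. By (b) the Lie algebra $\gg$ contains a $1$-dimensional ideal $\aa$; by injectivity $\msf d_\za(\aa)$ is a $1$-dimensional ideal of $\G^\za$, spanned by an analytic field $X$ that is nontrivial because the $1$-dimensional normal subgroup is not contained in the discrete kernel. Since $\aa$ is an ideal, $[Y,X]$ is a constant multiple of $X$ for each $Y \in \G^\za$, so $\G^\za$ tracks $X$ with constant (in particular continuous) tracking functions. As $M$ is compact and $\chi(M) \neq 0$, the Poincar\'e-Hopf theorem gives $\msf i_{\Z X}(X) = \chi(M) \neq 0$, so $K := \Z X$ is an essential $X$-block and the standing hypotheses of the Main Theorem hold for $(\G^\za, X, K)$.

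Now I would distinguish the two ground fields. If $M$ is real, condition (*) is vacuous, so Theorem \ref{th:main} gives $\Z{\G^\za} \cap K \neq \varnothing$ and therefore $\Fix \za \neq \varnothing$. If $M$ is complex I argue by contradiction, supposing $\Fix \za = \Z{\G^\za} = \varnothing$. Since $X \in \G^\za$ we have $\Z{\G^\za} \subseteq \Z X$, so $\Z{\G^\za} \cap \Z X = \Z{\G^\za} = \varnothing$; Theorem \ref{thNew1} then exhibits $M$ as a holomorphic $\CC P^1$-bundle over $\CC P^1$ with $\G^\za$ isomorphic to $\ss\ll(2,\CC)$, ${\mathfrak{g}}{\mathfrak {l}}(2,\CC)$, or the product of $\ss\ll(2,\CC)$ with the affine algebra of $\CC$. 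For such a bundle $\chi(M) = 4$, which is positive and even, and each of the three algebras admits $\ss\ll(2,\CC)$ as a quotient; as $\G^\za \cong \gg$, this contradicts (c). Hence $\Fix \za \neq \varnothing$.

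The delicate point — and the reason (c) restricts to $\chi(M)$ \emph{positive} and even whereas (*) concerns every even index — is that the negative-even values of the index that (*) would also forbid simply cannot arise: by Theorem \ref{thNew1} the only obstruction to a fixed point forces $M$ to be a $\CC P^1$-bundle over $\CC P^1$ and hence $\chi(M) = 4$. The remaining work is bookkeeping: checking that the infinitesimal action satisfies the standing hypotheses and that each exceptional algebra of Theorem \ref{thNew1} surjects onto $\ss\ll(2,\CC)$. I expect no real analytic difficulty, since all the hard content is supplied by Theorem \ref{th:main} and Theorem \ref{thNew1}.
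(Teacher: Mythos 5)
Your proof is correct, and its skeleton coincides with the paper's: pass to the infinitesimal action $\G^\za\cong\gg$ (injectivity from almost effectiveness, $\Fix\alpha=\Z{\G^\za}$ from connectedness), take $X$ spanning the image of the $1$-dimensional ideal so that $\G^\za$ tracks $X$ with constant tracking functions, and use Poincar\'e--Hopf to see that $K=\Z X$ is an essential $X$-block. Where you genuinely diverge is the complex case. The paper's proof is a single direct application of Theorem \ref{th:main}, reading hypothesis (c) as an instance of (*) via ${\msf i}_{\Z X}(X)=\chi(M)$; this tacitly uses the version of (*) stated in the abstract and actually used in the proof of Theorem \ref{th:main} (``positive and even'', so that the ``particular case'' there has index negative or odd), whereas the theorem body as printed says only ``even''. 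Under that literal wording, a negative even $\chi(M)$ together with an $\ss\ll(2,\CC)$ quotient would satisfy (c) but violate (*), and the one-line application would not go through. Your detour by contradiction through Theorem \ref{thNew1} closes exactly this gap: a fixed-point-free complex example forces $M$ to be a holomorphic $\CC P^1$-bundle over $\CC P^1$, hence $\chi(M)=4>0$, and each of the three exceptional algebras ($\ss\ll(2,\CC)$, ${\mathfrak g}{\mathfrak l}(2,\CC)$, $\ss\ll(2,\CC)$ times the affine algebra) surjects onto $\ss\ll(2,\CC)$, contradicting (c). The cost is that you invoke the classification result (Theorem \ref{thNew1}, i.e.\ Theorem \ref{thM1A} plus Remark \ref{reMla}), which is far heavier machinery than the direct appeal to Theorem \ref{th:main}; but this is not circular, since that classification is proved independently of Theorem \ref{thM3}, and it makes the role of the word ``positive'' in hypothesis (c) transparent rather than implicit.
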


\begin{proof}  
As $G$ contains a 1-dimensional normal subgroup, there exists
$X\zpe\G^\za$ spanning a 1-dimensional ideal.
Evidently  $\G^\za$ tracks $X$.   
The $X$-block $\msf i (X)$ is essential by Poincar\'e-Hopf,  and the
conclusion follows from Theorem \ref{th:main}.
\end{proof}

\begin{corollary}               \mylabel{coM3}

Assume: 
\begin{itemize}

\item $M$ is  complex and  compact, and   $\chi (M)\ne 0$. 

\item $G$ is  solvable.
\end{itemize}

Then every holomorphic action $(\alpha, G, M)$ has a fixed point.
\end{corollary}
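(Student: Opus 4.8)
The plan is to route the argument through Theorem \ref{thM2} rather than Theorem \ref{thM3}, because the corollary imposes no effectiveness hypothesis, whereas Theorem \ref{thM2} constrains only a solvable Lie algebra of vector fields and makes no reference to faithfulness.

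First I would let $\G$ denote the Lie algebra of $G$ and pass to the infinitesimal action $\msf d_\za\co\G\to\V^\om(M)$, so that $\G^\za:=\msf d_\za(\G)$ is a Lie subalgebra of $\V^\om(M)$. Since $G$ is solvable, $\G$ is a finite-dimensional solvable Lie algebra over $\CC$; its homomorphic image $\G^\za$ is therefore also finite-dimensional and solvable. As $M$ is compact and complex with $\chi(M)\ne 0$, Theorem \ref{thM2} applies to $\G^\za$ and yields $\msf Z(\G^\za)\ne\emp$. Finally, because $G$ is connected we have $\Fix\alpha=\msf Z(\G^\za)$, whence $\Fix\alpha\ne\emp$, as required.

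The only subtlety I anticipate is the mismatch with Theorem \ref{thM3}, which would demand that $\za$ be almost effective (that is, $\msf d_\za$ injective), while the corollary asserts a fixed point for \emph{every} holomorphic action. Working with the image algebra $\G^\za$ removes this difficulty, since the kernel of $\msf d_\za$ plays no role in Theorem \ref{thM2}. The degenerate case $\G^\za=\{0\}$, corresponding to a trivial action, is likewise harmless: it simply gives $\msf Z(\G^\za)=M\ne\emp$. Thus beyond these bookkeeping points the argument is immediate, the substance being carried entirely by Theorem \ref{thM2}.
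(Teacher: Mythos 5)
Your proposal is correct and is essentially the paper's own argument: the paper likewise proves the corollary by applying Theorem \ref{thM2} to the image algebra $\G^\za$, noting that solvability survives passage to a homomorphic image even when the action is not almost effective, and then using connectedness of $G$ to identify $\Fix\alpha$ with ${\msf Z}(\G^\za)$. Your extra remarks about why Theorem \ref{thM3} is not the right tool and about the trivial-action case are sound bookkeeping but add nothing beyond what the paper's one-line proof already contains.
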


\begin{proof}

Apply Theorem \ref{thM2}, taking into account that even if the action
is not almost effective, the Lie algebra $\G^\za$ is solvable.
\end{proof}

As remark \ref{reMnr} shows, the analog for real analytic surfaces
does not hold, but if $G$ is supersolvable it follows from Theorem
\ref{thM3} because $G$ since they always contain a normal subgroup of
dimension one. ({\sc Hirsch \& Weinstein} \cite{HW2000} proved this in
a different way, and later on {\sc Turiel} \cite{Turiel03} listed the
Lie groups, solvable or not, that act without fixed point when
$\chi(M)\znoi 0$.)

The existence of fixed points for continuous actions on compact real
surfaces with nonzero Euler characteristic was proved by {\sc Lima} \cite
{Lima64} for the group $\RR^n$, and {\sc Plante} \cite{Plante86} for
connected nilpotent Lie groups; result extended by {\sc Hirsch} \cite{
  Hirsch2014} to nilpotent local actions.  Moreover {\sc Lima} and
{\sc Plante} showed that that every compact surface supports a
continuous fixed-point free action by the affine group $Aff_+ (\RR)$,
the first non-nilpotent Lie group.  It belongs to the folklore that
this kind of action can be taken smooth (\cite{Belliart97}; for an 
elementary construction of a such action see \cite{Turiel16}). 

Related results are in the articles \cite{Belliart97, Bon-Sant2015,
 Hirsch2010, Hirsch2015, Schneider74, Stowe80, Turiel90}.

 
\section{Lie algebras on compact connected complex surfaces with no common zero}   \mylabel{seNU1}
 
 In this section $\FF=\CC$, $M$ is a compact connected complex surface and every object is
 holomorphic unless another thing is stated.
 Our purpose is to describe all the compact cases in which Theorem \ref{th:main} fails if 
 hypothesis  (*) is deleted. As we will see, essentially there are only two models.
 Before constructing them let us recall some well known things
 necessary later on. 
  
First consider a compact 1-codimensional  submanifold $P$ of a real or complex 
manifold $N$ and a vector field $X$ on $N$. Assume the existence of an open set 
$A\zcco P$ such that $Z(X)\zin A=P$.
\smallskip

Consider $p\zpe P$. Suppose that there exists an 1-dimensional
foliation $\mathcal F$ defined on an open set $p\zpe B\zco A$ such that:
\begin{itemize}
\item$X$ is tangent to (leaves of) $\mathcal F$.
\item $T_q N=T_q \mathcal F \zdi T_q P$ for every $q\zpe B\zin P$.
\end{itemize}

Notice that $\mathcal F$ is unique if it exists.

Let $L_p$ be the leaf of $p$. Then $X\zbv L_p$ inside this leaf has an isolated  singularity at $p$
and by definition, the index of this singularity is {\it the transversal index of $X$ at $p$}.

Clearly when the transversal index is defined at $p$, it is also defined around $p$ in $P$ and it is 
locally constant. Therefore if the transversal index of $X$ is defined at any point then it has to be
constant on each connected component of $P$, which allows us to refer to it as {\it the index of
$X$ at (any component of) $P$}. 
\smallskip

On the other hand recall that the sphere $S^2$  admits a unique complex structure
up diffeomorphism, usually represented by $\CC P^1$. Besides, the
group of  diffeomorphisms (biholomorphic maps) of $\CC P^1$  is the
projective group $PGL(2,\CC)$, which is the quotient of $SL(2,\CC)$ by
$\{\mbf{I},-\mbf{I}\}$, where $\mbf{I}\in SL(2,\CC)$ denotes the identity.
Thus $\mcal V^\om (\CC P^1 )$ equals the algebra of projective vector fields,
which is isomorphic to $\ss\ll(2, \CC)$.

Note that the structure group of holomorphic fibre bundles over $\CC P^1$
with fibre $\CC P^1$ is $PGL(2,\CC)$. Since $\ZZ_2$ is the fundamental
group of $PGL(2,\CC)$, from the real point of view, there only exist
two fibre bundles over $\CC P^1$ with fibre $\CC P^1$; more exactly
$\CC P^1 \zpor\CC P^1$ and $\CC P^2\sharp{\overline{\CC P}^2}$, which
is the result of blowing up a point of $\CC P^2$.

Actually in the $C^\zinf$-category there are only two fibre bundles
over $S^2$ with fibre $S^2$ (${\rm Diff_+ }(S^2 )$ strongly retracts
onto $SO(3)$, {\sc Smale} \cite{Smale59}).

  From the complex viewpoint things are different because a holomorphic
map from an open set $A\zco\CC$, which includes $S^1$, into
$SL(2,\CC)$ extends to $D^2$ in the $C^\zinf$-category but not always
like holomorphic map.

\begin{model}[${\msf Z}(X)$ is connected]  \mylabel{Mod1}

On $\CC P^1$ consider the projective
vector fields $\widetilde X$ which on
$\CC \zco\CC P^1$ is written as $z^2 {\frac {\zpar} {\zpar z}}$. 
Set $M=\CC
P^1 \zpor\CC P^1$; let $\zp_1 ,\zp_2$ be the canonical projections. On
the other hand let $\G\zco\mcal V^\om (M)$ be the Lie algebra tangent
to the first factor and isomorphic by $(\zp_1)_\ast$ to $\mcal V^\om (\CC P^1 )$,
and $X$ the vector field on $M$ tangent to the second factor whose
projection by $(\zp_2)_\ast$ equals $a\widetilde X$, $a\zpe
\CC\,\verb=\=\,\{0\}$. Clearly $[X,\G]=0$.

Moreover ${\msf Z}(\G)=\emp$ so ${\msf Z}(X)\zin{\msf Z}(\G)=\emp$,
while ${\msf Z}(X)$ is a 1-submanifold diffeomorphic to $\CC P^1$ and
transversally to it the index of $X$ equals 2.
\end{model}

\begin{remark}[A compactification construction]  \mylabel{reMaa}

Since $\CC^k \zco \CC P^k$ any linear transformation of $\CC^k$ is the restriction of
a projective transformation of $\CC P^k$ and $GL(k,\CC)$ can be regarded as a subgroup
of $PGL(k+1,\CC)$ in a natural way. 
Now consider a holomorphic line bundle $\zp\co E\zfl N$. Completing each fibre with its own infinity
point gives rise to a new holomorphic fibre bundle $\zp\co Q\zfl N$ with fibre $\CC P^1$ (for sake of 
simplicity the projection map is still denoted $\zp$). 

More exactly if $\{U_\zl \}_{\zl\zpe L}$ is a 
trivializing open covering of $N$ with transition functions 
$g_{\zl\zm}\co U_\zl \zin U_\zm \zfl GL(1,\CC)$ associated to $\zp\co E\zfl N$, then at the same time 
it is associated to $\zp\co Q\zfl N$ if $GL(1,\CC)$ is seen as a subgroup of $PGL(2,\CC)$ and,
therefore, every $g_{\zl\zm}$ takes its values in $PGL(2,\CC)$. 

Let $Q_0$ be (the image of) the zero section of $E$ and set 
$Q_\zinf \co =Q\,\verb=\=\,E$. Clearly
$Q_\zinf$ is a complex submanifold, that we will call the infinity section,
and $\zp\co Q_\zinf \zfl N$ a diffeomorphism.

The radial vector field $R$ of $E$ extends to a vector field on $Q$ still called $R$ since the 
diffeomorphism $z\zpe\CC\,\verb=\=\,\{0\}\zfl z^{-1}\zpe\CC\,\verb=\=\,\{0\}$ transforms
$z{\frac {\zpar} {\zpar z}}$ in $-z{\frac {\zpar} {\zpar z}}$. Besides
${\msf Z}(R)=Q_0 \zun Q_\zinf$ and transversally to $Q_0$ and $Q_\zinf$ the index of R
equals 1.  

Set $E'\co =Q-Q_0$. As any diffeomorphism $\zr\co\CC\zfl\CC$ which preserves
$-z{\frac {\zpar} {\zpar z}}$ is a linear automorphism, $\zp\co E'\zfl N$ has a natural
structure of holomorphic line bundle with zero section $Q_\zinf$ and radial vector field
$-R$. With respect to the open covering of $N$ giving before, its transition functions
$g'_{\zl\zm}$ are $g'_{\zl\zm}=g^{-1}_{\zl\zm}$. Thus $c_1 (E')=-c_1 (E)$ where $c_1$
denotes the first Chern class. Moreover if one adds the infinity point to each fibre of
$E'$ one obtains $\zp\co Q\zfl N$ again but this time $Q_0$ is the infinity section.

Of course the constructions above do not depend on the trivializing open covering of $N$.
\end{remark}

\begin{remark}  \mylabel{reMbb}

Let $\zp\co E\zfl N$ be a holomorphic vector bundle and $R$ its radial vector field. 
Consider a diffeomorphism $f\co E\zfl E$ that preserves $R$. Then $f$ maps fibres in
fibres, which induces a second diffeomorphism ${\tilde f} \co N\zfl N$ such that
${\tilde f}\zci\zp=\zp\zci f$ and every 
$f\co\zp^{-1}(q)\zfl\zp^{-1}({\tilde f}(q))$, $q\zpe N$, is a linear isomorphism.

Indeed, $f$ has to map ${\msf Z}(R)$, that is the zero section of $E$, into itself. On the
other hand each fibre $\zp^{-1}(q)$ is the set of all the points of $E$ that have the zero
of $\zp^{-1}(q)$ as $\za$-limit. 

Moreover if $\zp\co E\zfl N$ is a holomorphic line bundle and $\zp\co Q\zfl N$  its 
compactification given in Remark \ref{reMaa} then $f$ extends to a diffeomorphism
of $Q$ (obvious since each $f\co\zp^{-1}(q)\zfl\zp^{-1}({\tilde f}(q))$, is a linear
so projective). Thus any complete vector field $Y$ on $E$ such that $[Y,R]=0$ 
extends to a vector field on $Q$.

Indeed, let $\zF_t$ be the flow of $Y$. Then every $\zF_t$ preserves $R$, so maps
fibres in fibres, which implies that $Y$ is foliate with respect to fibres. Moreover $Y$
has to be tangent to ${\msf Z}(R)$, that is to the zero section. Therefore if $U$ is a
trivializing open set of $N$ and one identifies $\zp^{-1}(U)$ to $U\zpor\CC$ endowed 
with variables $(y,z)$ one has
$$Y(y,z)={\widetilde Y}(y)+\zf(y,z)\zpu z{\frac {\zpar} {\zpar z}}$$
where ${\widetilde Y}$ is a vector field on $U$. 

But $[Y,R]=0$ and $R=z{\frac {\zpar} {\zpar z}}$, so function $\zf$ only depends on 
$y$. Since the compactification of $U\zpor\CC$ is $U\zpor\CC P^1$ and clearly ${\widetilde Y}$
and $\zf\zpu z{\frac {\zpar} {\zpar z}}$ extend to $U\zpor\CC P^1$ so does $Y$.   
\end{remark}

\begin{model}[${\msf Z}(X)$ is not connected]  \mylabel{Mod2}

This kind of examples are constructed on the compactation given in Remark
\ref{reMaa} of holomorphic line bundles over $\CC P^1$. As the Picard group of
$\CC P^1$ is $\mathbb Z$ these line bundle are {\it holomorphically} classified by
their first Chern class \cite{GrifHarris78}. We will need:

\begin{lemma}  \mylabel{leMaa}

Let $\zp\co E\zfl \CC P^1$ be a holomorphic line bundle and $R$ its radial vector field.
Then there exists one and only one Lie algebra $\G\zco \V^\zw (E)$ such that:
 \begin{itemize}  
\item $[R,\G]=0$,    

\item $\G$ is isomorphic by $\zp_\zas$ to $ \V^\zw (\CC P^1)$.
\end{itemize}

Moreover $\G$ comes from an action of $SL(2,\CC)$ on $E$. Therefore its 
elements are complete vector fields.
\end{lemma}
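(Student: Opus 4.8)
The plan is to fix as ambient object the Lie algebra $\mathfrak L := \{\,Y \in \V^\omega(E) : [Y,R]=0\,\}$, which is the kernel of the derivation $\ad R$ and hence a genuine Lie subalgebra of $\V^\omega(E)$, and to analyze the restriction of $\pi_*$ to it. By the computation underlying Remark \ref{reMbb}, in any trivialization $\pi^{-1}(U)\cong U\times\CC$ with fibre coordinate $z$ an element of $\mathfrak L$ has the form $Y=\widetilde Y(y)+\phi(y)\,z\,\frac{\partial}{\partial z}$, so $\pi_* Y=\widetilde Y$ and $R$ is central in $\mathfrak L$. An element of $\ker(\pi_*|_{\mathfrak L})$ is vertical, so of the shape $\phi(y)\,z\,\frac{\partial}{\partial z}=\phi\,R$; since $R$ is a globally defined field the coefficient $\phi$ is a globally defined holomorphic function on $\CC P^1$, hence constant. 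Thus $\ker(\pi_*|_{\mathfrak L})=\CC R$, and once surjectivity of $\pi_*$ is known we obtain a central extension $0\to\CC R\to\mathfrak L\xrightarrow{\pi_*}\ss\ll(2,\CC)\to0$.

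Uniqueness I would then settle algebraically. Any Lie algebra $\G$ as in the statement is the image of a Lie-algebra section $\sigma$ of $\pi_*$, namely $\sigma=(\pi_*|_{\G})^{-1}$. Given two sections $\sigma_1,\sigma_2$, their difference takes values in $\ker\pi_*=\CC R$, say $\sigma_2=\sigma_1+\theta\,R$ with $\theta\colon\ss\ll(2,\CC)\to\CC$ linear; because $R$ is central, demanding that both $\sigma_i$ respect brackets forces $\theta([X,Y])=0$ for all $X,Y$, and since $\ss\ll(2,\CC)$ is perfect this gives $\theta=0$. Hence any two such sections coincide, so $\G$ is unique; concretely, once existence is granted one has $\mathfrak L=\CC R\oplus\G$ with $R$ central, whence the intrinsic description $\G=[\mathfrak L,\mathfrak L]$.

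Existence, surjectivity of $\pi_*$, and the completeness claim I would obtain in one stroke by exhibiting an actual $SL(2,\CC)$-action. As $\mathrm{Pic}(\CC P^1)=\ZZ$, the bundle $E$ is some $\mathcal O(n)$. The tautological bundle $\mathcal O(-1)\subset\CC P^1\times\CC^2$ inherits from the standard representation of $SL(2,\CC)$ on $\CC^2$ a holomorphic action that is linear on fibres and covers the projective action on the base; being fibrewise linear it preserves the radial field $R$. Passing to duals and tensor powers endows every $\mathcal O(n)$ with such an action $\alpha$. Its infinitesimal action $\mathsf d_\alpha\colon\ss\ll(2,\CC)\to\V^\omega(E)$ then takes values in $\mathfrak L$ and satisfies $\pi_*\circ\mathsf d_\alpha=$ the canonical isomorphism $\ss\ll(2,\CC)\cong\V^\omega(\CC P^1)$; in particular $\pi_*$ is onto, $\mathsf d_\alpha$ is injective, its image is the desired $\G$, and every element of $\G$ is complete because it is a fundamental field of a global group action.

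The one genuinely delicate point is that the structural group at play is $SL(2,\CC)$ and not $PGL(2,\CC)$: the lift of the base action to $\mathcal O(n)$ need not descend to $PGL(2,\CC)$ --- for instance $-\mathbf I$ acts on fibres by $(-1)^n$ --- so to obtain an honest fibrewise-linear action on the total space one must work with the simply connected cover. Granting this, the equivariant structure on the tensor powers is a matter of functoriality, and the verifications that $\mathsf d_\alpha$ preserves $R$ and projects to the standard isomorphism are immediate from fibrewise linearity.
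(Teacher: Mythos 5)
Your proposal is correct and follows essentially the same route as the paper's proof: uniqueness comes down to the fact that two such algebras differ by fibre-constant (hence, by compactness of $\CC P^1$, constant) multiples of $R$, which are killed by perfectness/simplicity of $\ss\ll(2,\CC)$, and existence comes from the fibrewise-linear $SL(2,\CC)$-action on the tautological bundle induced by the standard representation on $\CC^2$, propagated to every element of $\mathrm{Pic}(\CC P^1)$ by duals and tensor products. Your packaging of uniqueness via the centralizer $\mathfrak{L}$ of $R$ and the central extension $0\to\CC R\to\mathfrak{L}\to\ss\ll(2,\CC)\to 0$ is only a cosmetic reorganization of the paper's element-wise comparison of two candidate algebras $\G$ and $\mcal H$.
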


\begin{proof}

{\underline{Uniqueness:}} let $\G,\H$ be as in the lemma. Since the elements of $\G$ and $\H$
are tangent  to the zero section and $\zp_\zas\co\G\zfl\V^\zw(\CC P^1 )$,
$\zp_\zas\co\H\zfl\V^\zw(\CC P^1 )$ isomorphisms, then every element of $\H$ writes as
$Y+a_Y R$ where $Y\zpe\G$ and $a_Y \co E\zfl\CC$ is holomorphic. Now
$[R,Y+a_Y R]=0$ implies that $a_Y$ is constant along fibres.Therefore 
$a_Y =b_Y \zci\zp$ where $b_Y \co\CC P^1 \zfl \CC$ is a holomorphic function, so constant.
In other words $\H=\{Y+a_Y R\co Y\zpe\G,\, a_Y \zpe\CC\}$. But $\mcal I
=\{Y\zpe\G\co a_Y =0\}$ is a nonzero ideal so $a_Y =0$ whatever
$Y\zpe\G$ and $\H=\G$.

{\underline{Existence:}} given a holomorphic line bundle $\zp\co E\zfl\CC P^1$ it suffices to show the 
existence of an action  $\za\co SL(2,\CC)\zpor E \zfl E$ which is almost effective and 
fibre preserving.  Recall that {\it fibre preserving} means the existence of a second action 
(the projected one on the basis $\CC P^1$) $\zb\co SL(2,\CC)\zpor\CC P^1 \zfl\CC P^1$ such that
$\zp(\za(g,e))=\zb(g,\zp(e))$ and
$\za(g,\quad)\co\zp^{-1}(p)\mapsto\zp^{-1}(\zb(g,p))$ is a linear isomorphism
for any $g\zpe SL(2,\CC)$, $e\zpe E$ and $p\zpe\CC P^1$.
For that one will show the existence of such action of $SL(2,\CC)$ for any value
of the first Chern class.

First observe that given two
almost effective actions $(\za,SL(2,\CC),E)$ and $(\za',SL(2,\CC),E')$
over actions $(\zb,SL(2,\CC),\CC P^1 )$ and $(\zb',SL(2,\CC),\CC
P^1 )$ respectively, if $\zb=\zb'$ then there exists a natural
almost effective action of $SL(2,\CC)$ on $E\zte E'$ over
$\zb=\zb'$. Besides $c_1 (E\zte E')=c_1 (E)+c_1 (E')$.

On the other hand an action of $SL(2,\CC)$ on $E$ induces an actions
on its dual vector bundle $E^\ast$, both over the same action on $\CC
P^1$ (recall that $c_1 (E^\ast )=-c_1 (E))$.

Thus it suffices to construct it on the canonical line bundle $E_1$ over $\CC P^1$.
But it is well known that the natural action of $SL(2,\CC)$ on $\CC^2$
induced a such action on $E_1$ first by setting $g\zpu(v,w)=(g\zpu v,g\zpu w)$ on
${\widetilde F}\co=\{(v,w)\zpe(\CC^2 \,\verb=\=\,\{0\})\zpor\CC^2 
\co v\zex w=0\}$,
and then by considering the induced action on the quotient $E_1$ of
$\widetilde F$ under the equivalence relation $(v,w)\mcal R (v',w')$
$\zsii$ $v\zex v'=0$ and $w=w'$.
\end{proof}

Consider a holomorphic line bundle $\zp\co E\zfl\CC P^1$ and its compactification
given in Remark \ref{reMaa}. Depending on the context denote by $\G$ the Lie
algebra of Lemma \ref{leMaa} or its extension to $M$, which  exists because
$\G$ on $E$  consists of complete vector fields (Remark \ref{reMbb}).
Analogously $R$ will be the radial vector field on $E$ or its extension to $M$.

{\it Always on $M$} note that $[\G,R]=0$ and $\G$ is isomorphic by 
$\zp_\zas$ to $\V^\zw (\CC P^1 )$, so ${\msf Z}(\G)=\zva$ since
${\msf Z}(\V^\zw (\CC P^1 ))=\zva$. Therefore $(M,X,\G)$ where
$X=aR$, $a\zpe\CC\,\verb=\=\,\{0\}$, is an example in which Theorem 
\ref{th:main} fails, and ${\msf Z}(X)=M_0 \zun M_\zinf$ where $M_0$ is
the zero section and $M_\zinf$ the infinity one ($Q_0$ and $Q_\zinf$ in 
the notation of Remark \ref{reMaa}). Obviously ${\msf Z}(X)$ possesses 
two connected components.

Clearly $\zp\co E\zfl\CC P^1$ and $\zp\co E'\zfl\CC P^1$ where $E'\co=M-M_0$ give
rise to the same example (up to a nonzero coefficient multiplying $X$), so these examples
only depend on the absolute value of the first Chern class.

When $\zbv c_1 (E)\zbv\znoi 0$ this number equals the order of the fundamental group
of $M-{\msf Z}(X)=M-(M_0 \zun M_\zinf)$. If $\zbv c_1 (E)\zbv=0$ then the fundamental 
group of  $M-{\msf Z}(X)$ is $\ZZ$. Thus $\zbv c_1 (E)\zbv$ is an invariant which classifies
this kind of examples up to a nonzero coefficient multiplying $X$.
\end{model}

In compact complex surfaces with nonvanishing Euler characteristic,
Models \ref{Mod1} and \ref{Mod2} are the only ways for constructing
Lie algebras tracking nontrivial vector fields with no common zero. More exactly:

\begin{theorem}         \mylabel{thM1A}

Let $M$ be a compact, connected, complex 2-manifold  with $\chi(M)\znoi
0$. Assume   $\G\subset \V^\om (M)$ is a finite-dimensional Lie algebra
that tracks a non-trivial vector field $X\zpe\V^\om (M)$. If 
$\Z X \zin\Z \G=\emp$ then the following conditions hold:
\begin{description}

\item[(a)] $M$ is a holomorphic fibre bundle
over $\CC P^1$ with fibre $\CC P^1$, hence
 $M$ is simply connected and $\chi (M)=4$.

\item[(b)] $\G$ contains a subalgebra $\mathcal A$ isomorphic to
$\ss\ll(2,\CC)$, with ${\msf Z}(\mcal A)=\emp$.

\item[(c)] $(M,X,\mathcal A)$ is holomorphically equivalent to the example of 
Model \ref{Mod1} or to an of the examples of Model \ref{Mod2}. 

\end{description}

\end{theorem}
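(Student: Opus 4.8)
The plan is to first activate the exceptional hypothesis. Since $\chi(M)\neq 0$, the Poincar\'e--Hopf Theorem makes $K:=\Z X$ an essential $X$-block, and by assumption $\G$ tracks $X$ with $\Z\G\cap K=\emp$. Were condition (*) of Theorem \ref{th:main} to hold we would obtain $\Z\G\cap K\neq\emp$, a contradiction; hence (*) fails. Because $M$ is complex, this says precisely that ${\msf i}_K(X)$ is even \emph{and} some quotient of $\G$ is isomorphic to $\ss\ll(2,\CC)$. Since $\ss\ll(2,\CC)$ is semisimple, the ideal defining that quotient contains the radical of $\G$, so a Levi factor of $\G$ maps onto $\ss\ll(2,\CC)$; as $\ss\ll(2,\CC)$ is simple, one simple summand of the Levi factor maps isomorphically, producing a subalgebra $\A\zco\G$ with $\A\cong\ss\ll(2,\CC)$ that tracks $X$ because $\G$ does.

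Next I would integrate $\A$: on the compact manifold $M$ every holomorphic vector field is complete, so $\A$ generates a holomorphic $SL(2,\CC)$-action, and $\Z X$ is invariant because $\A$ tracks $X$. I claim $\Z\A\zco\Z X$. If $p\zpe\Z\A$ then $SL(2,\CC)$ fixes $p$ and acts on $T_p M\cong\C 2$ by a representation. A nontrivial $2$-dimensional representation of $\ss\ll(2,\CC)$ is the irreducible standard one (a reducible one splits into trivial summands by Whitehead) and so has no invariant line; yet tracking forces the line $\CC\zpu X(p)$ to be invariant whenever $X(p)\neq 0$, while a trivial isotropy representation would make the action trivial near $p$ by holomorphic linearization, hence trivial on all of $M$. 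Either way $X(p)=0$, so $p\zpe\Z X$. Moreover the action fixes no curve pointwise: being semisimple, $SL(2,\CC)$ admits no nontrivial character, so a pointwise-fixed curve would carry a trivial normal action and the whole action would be trivial. Thus $\Z\A$ is a finite subset of $\Z X$.

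Since $\Z\G\cap\Z X=\emp$, the $\G$-orbit through any $p\zpe\Z X$ is positive-dimensional and remains in $\Z X$, so $\Z X$ has no isolated points and is a pure one-dimensional compact analytic curve. The connected group $SL(2,\CC)$ fixes each irreducible component $C$ setwise and, by the previous step, not pointwise; a compact curve carrying a nontrivial $SL(2,\CC)$-action is $\CC P^1$ with the transitive M\"obius action, so every component is a smooth rational curve $C\cong\CC P^1$ realized as a single closed orbit, on which $\A$ has no common zero. As $\Z\A\zco\Z X=\bigcup C$, this forces $\Z\A=\emp$, giving (b). For (a) I would examine the action near a component $C$: its normal bundle $\nu\zfl\CC P^1$ is an $SL(2,\CC)$-line bundle, and by the uniqueness in Lemma \ref{leMaa} the action there is the canonical one attached to $\nu$, with $X$ modelled transversally on the radial field $R$. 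Using completeness together with Remarks \ref{reMaa}--\ref{reMbb} I would propagate this identification over all of $M$, concluding that $M$ is the compactification of $\nu$, a holomorphic $\CC P^1$-bundle over $\CC P^1$, hence simply connected with $\chi(M)=4$.

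Finally I would read off (c) from the number of components of $\Z X$. If $\Z X$ is connected it is a single $\CC P^1$ meeting the transverse foliation with index ${\msf i}_K(X)/\chi(\CC P^1)=2$, and the identification places $(M,X,\A)$ in Model \ref{Mod1}; if $\Z X$ has two components they are the zero and infinity sections $Q_0,Q_\infty$ of $\nu$, the transverse index is $1$, $X$ is a nonzero multiple of $R$, and $(M,X,\A)$ is one of the examples of Model \ref{Mod2}, classified by $|c_1(\nu)|$. The main obstacle is exactly this last globalization: upgrading the transverse linearization of the $SL(2,\CC)$-action near $\Z X$ to a genuine biholomorphism of $M$ with the compactified line bundle. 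This is where completeness of the fields, the uniqueness of Lemma \ref{leMaa}, and the parity of ${\msf i}_K(X)$ --- which governs whether the action must use $SL(2,\CC)$ rather than factoring through $PSL(2,\CC)$, and hence which bundles $\nu$ occur --- must all be combined.
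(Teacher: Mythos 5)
Your route to part (b) is essentially sound and genuinely different from the paper's: you obtain the $\ss\ll(2,\CC)$-quotient by running Theorem \ref{th:main} in contrapositive form and then apply Levi theory, whereas the paper restricts $\G$ to a component $K$ of $\msf Z(X)$, shows that the image there must be all of $\ss\ll(2,\CC)$ (the trivial, one-dimensional and affine cases force a zero on $K$, exactly as in the proof of Theorem \ref{th:main}), and then lifts that image back to a subalgebra $\A\zco\G$. Your eigenvector argument for $\msf Z(\A)\zco\msf Z(X)$ under mere tracking is correct: tracking makes the line $\CC\zpu X(p)$ invariant under the isotropy representation, which for simple $\A$ is either the faithful standard representation or trivial. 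However, the clean way to exclude the trivial case is the paper's jet argument --- for a finite-dimensional analytic Lie algebra $\A_{1}(p)$ is nilpotent, so a simple $\A$ cannot have trivial isotropy at a common zero --- rather than your appeal to ``holomorphic linearization,'' which you do not justify. A more substantive omission: you never establish $[X,\A]=0$, which is indispensable downstream, since Lemma \ref{leMaa} requires commutation with the radial field, Lemmas \ref{leM3} and \ref{leM4}(b) assume $[X,\A]=0$, and both Models are built on it. The paper gets it cheaply: on a compact connected complex surface tracking functions are constant, so $Y\mapsto a_Y$ (where $[Y,X]=a_YX$) is a linear map vanishing on commutators, hence identically zero because $\ss\ll(2,\CC)$ is perfect.

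The decisive gap is that parts (a) and (c) are not proved: your ``propagation'' of the normal-bundle picture near one component of $\msf Z(X)$ to a global identification of $M$ is precisely the content that must be supplied, and you yourself flag it as the main obstacle. The paper's mechanism here is the one your sketch lacks: $X$ defines a one-dimensional foliation $\mcal F$ on $M\setminus\msf Z(X)$; Lemma \ref{leM3} shows each leaf lies in $\msf Z(Y)\zin(M\setminus\msf Z(X))$ for some $Y\zpe\A$ and is therefore closed; attaching the points of $\msf Z(X)$, using the coordinates of Lemma \ref{leM4}(b), compactifies the leaves into spheres and produces a foliation $\mcal F'$ all of whose leaves are copies of $\CC P^1$, which --- by transversal transitivity of the $SL(2,\CC)$-action --- is a holomorphic fibre bundle $\pi\co M\rightarrow\CC P^1$. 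This yields (a), and then $\chi(M)=4$ forces the dichotomy behind (c): either $\msf Z(X)$ has two components and the transverse index is $1$ (leaves of $\mcal F$ are cylinders, and the $\CC$-bundle structure on $M$ minus one component gives Model \ref{Mod2}), or $\msf Z(X)$ is connected with transverse index $2$ (leaves are planes; the $SL(2,\CC)$-orbits give a second transverse $\CC P^1$-fibration $\pi'$, and $\pi\times\pi'$ is a biholomorphism onto $\CC P^1\times\CC P^1$, giving Model \ref{Mod1}). Note in this connection that your phrase ``$X$ modelled transversally on the radial field'' is wrong in the connected case: there the transverse model is $z^2\,\partial/\partial z$, not $z\,\partial/\partial z$. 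Without this foliation analysis, the theorem remains unproved beyond (b).
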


This result will be proved in section \ref{seM5} (see Remark \ref{reMla} for more 
details on the algebra $\G$).

\begin{example}  \mylabel{exM7}

Let $M$ be a compact, connected, complex 2-manifold,  Assume:
\begin{itemize}

\item  $\chi(M)\znoi
0$.

\item
$\G\zco\mcal V^\zw(M)$ is  a subalgebra isomorphic to
${\mathfrak {g}}\ll(2,\CC)$.
 
\item $\mcal A\zco\G$ is the unique subalgebra of $\G$  isomorphic 
to $\ss\ll(2, \CC)$ \cite{Jacobson79}, and $X\in\G$ spans the center of $\G$.

\item ${\msf Z}(\G)\znoi\emp$. (The case ${\msf Z}(\G)=\emp$ is 
described by Theorem \ref{thM1A}.)
\end{itemize}

 Lemma \ref{leM2}, applied to $X$ and $\mcal A$, shows that ${\msf Z}(\G)$ 
 is a finite set, $\{p_1 ,\dots,p_r \}$, $r \ge 1$, and 
${\msf Z}(\G)={\msf Z}(\A)$.   
Blowing up all these points gives rise to another compact, connected
complex 2-manifold $M'$, a vector field $X'\in \V^\om (M')$ coming
from $X$, and a Lie algebra $\mcal A' \zco\mcal V^\zw(M' )$
isomorphic to $\mcal A$ (hence to $\ss\ll(2,\CC)$). Moreover 
${\msf Z}(\mcal A' )=\emp$;
indeed as $\A$ is simple $\mcal A_0 (p_k )/\mcal
A_1 (p_k )$ equals the special linear algebra $\ss\ll(T_{p_k}M)$
for each $k=1,\dots,r$ (see section \ref{seM4} for definitions) and this
zero of $\A$ is deleted by the blowup process \cite{GrifHarris78}.

It follows from  the blowup construction that  $M$ and $M'$ have the same
fundamental group and $\chi(M')=\chi (M) +r$. Theorem \ref{thM1A}
shows that $M'$ is simply connected and $\chi(M')=4$. Therefore $M$ is also 
simply connected and $\chi(M)\zmei 3$. Since $S^4$ has no complex structure,
topologically $M$ is $\CC P^2$ and $r=1$.

Notice that the linear part of $X$ at $p_1$ has to be 
$a\mbf{Id}$, $a\zpe \CC \,\verb=\=\,\{0\}$, otherwise as $\mcal A_0 (p_1 )/\mcal
A_1 (p_1 )=\ss\ll(T_{p_1}M)$ all $k$-jets of $X$ at $p_1$ vanish and
$X=0$. Therefore transversally to $\CC P^1 \zeq M'
\,\verb=\=\,(M\,\verb=\=\,\{p_1 \})$ the index of $X'$ equals 1 and
$(M' ,X' ,\mcal A' )$ follows Model \ref{Mod2} for the canonical
line bundle $E_1$ since the normal vector bundle of
$ M' \,\verb=\=\,(M\,\verb=\=\,\{p_1 \})$ is isomorphic to $E_1$. 

As the examples of Model \ref{Mod2} are determined by the absolute value
of their first Chern class if one considers a second manifold  $N$,
$\H \zco\V^\zw (N)$, $\B\zco\H$ and $Y\zpe\H$ in the same
conditions as $M$, $\G$, $\A$ and $X$, and one blows up the only singular point
$q_{1}$ then there is a (holomorphic) diffeomorphism $\zf\co M'\zfl N'$ which
transforms  $\A'$ in $\B'$ and $X'$ in $aY'$ for some $a\zpe\CC \,\verb=\=\,\{0\}$.
Besides
$\zf(  {M' \,\verb=\=\,(M\,\verb=\=\,\{p_1 \})}  )
={N' \,\verb=\=\,(N\,\verb=\=\,\{q_1 \})}   $
because the first Chern class of their respective normal vector bundles are the same, 
that of $E_1$ (notice that the first Chern class of the other component of ${\msf Z}(X')$, 
or of ${\msf Z}(Y')$, is the opposite one).  

Now crushing ${M' \,\verb=\=\,(M\,\verb=\=\,\{p_1 \})}$ and
${N' \,\verb=\=\,(N\,\verb=\=\,\{q_1 \})}$ respectively into a point gives rise to a 
homeomorphism $\zq\co M\zfl N$ with $\zq(p_1 )=q_1$. Clearly
$\zq\co  {M\,\verb=\=\,\{p_1 \}}\zfl {N\,\verb=\=\,\{q_1 \}}$ is biholomorphic, so
$\zq\co M\zfl N$ is biholomorphic too and transform $\G$ in
$\H$. 

In other words, {\it up to isomorphism there is only one example of manifold $M$ and 
subalgebra $\G$ as above}. For instance: $M=\CC P^2$ and $\G$ the subalgebra
of those projective vector fields that on $\CC^2 \zco\CC P^2$ are linear.

Let $G$ be a connected Lie group and let $N,P$ be two compact connected manifolds
(real or complex it does not matter). Given two actions $\za\co G\zpor N\zfl N$ and
$\zb\co G\zpor P\zfl P$ we will say they are {\it equivalent} if there exist a diffeomorphism
$\zq\co N\zfl P$ and an automorphism $\zl\co G\zfl G$ such that
$$\za(g,p)=\zq^{-1}(\zb(\zl(g),\zq(p)))$$
for any $(g,p)\zpe G\zpor N$.
Assume $\za$ and $\zb$ are effective. From \cite{Palais57} follows that if there is a
diffeomorphism $\zq\co N\zfl P$ which transforms $Im\, d_\za$ in $Im\, d_\zb$ then 
$\za$ and $\zb$ are equivalent.

Therefore {\it any effective holomorphic action of $GL(2,\CC)$ on a connected compact
complex surface $M$, with $\chi(M)\znoi 0$, which possesses a fixed point is
equivalent to the natural action of $GL(2,\CC)$ on $\CC P^2$}.

\end{example} 


\section{Other examples}   \mylabel{seM2}

In this part we give several examples of finite dimensional Lie
algebras on surfaces tracking nontrivial vector fields, focusing
on complex surfaces, the most difficult case. 

\begin{example}   \mylabel{exM1}

Recall that in dimension two a projective vector field $Y$ means a
fundamental vector field of the natural action of $SL(3,\FF)$ on $\FF
P^2$, which is effective if $\FF=\RR$ and almost effective when
$\FF=\CC$ (its kernel equals $\{a\mbf{I} \co (a\zpe\CC,\, a^3 =1)\}$,
where $\mbf{I}\in SL(3,\CC)$ denotes the identity).  
The restriction of $Y$ to $\FF^2 \zco\FF
P^2$ can be written as
\[a_1 {\frac {\zpar} {\zpar x_{1}}}+a_2 {\frac {\zpar} {\zpar
    x_{2}}} +\zsu_{k,r=1,2}b_{kr}x_{k} {\frac {\zpar} {\zpar x_{r}}}
+\left(c_1 x_1 +c_2 x_2 \right)\left(x_1 {\frac {\zpar} {\zpar x_{1}}}
+x_2 {\frac {\zpar} {\zpar x_{2}}}\right)\]
with $a_1, a_2, b_{k r},c_k \zpe\FF$.

Now on $\FF^2$ consider the Lie algebra $\G$ corresponding to the
projective vector fields on $\FF P^2$ that vanish at the origin.  This
algebra tracks $X=x_1 {\frac {\zpar} {\zpar x_{1}}}+x_2 {\frac {\zpar}
{\zpar x_{2}}}$, itself belonging to $\G$ since $[Y,X]=-(c_1 x_1+c_2
x_2 )X$. Note that $\G$ has no ideal of dimension one, but the notion
of tracking will allow us to bridge this gap. Note also that $X$ and
$\G$ extend to $\FF P^2$, but the tracking functions do not.

Observe that  ${\msf Z}(X)={\msf Z}(\G)=\{(0,0)\}$, and this is an
essential $X$-block.

From $\FF P^2$ we construct a new 2-manifold $M'$ over $\FF$ by
blowing up the origin in $\FF^2$, and on it a vector field $X'$ and a
Lie algebra $\G'\subset \V^\omega (M')$, isomorphic to $\G$, which
tracks $X'$ (see \cite{GrifHarris78}).  Note that the blowup of the
origin is ${\msf Z}(X')$, an $X'$-block diffeomorphic to $\FF P^1$.  
As the linear part of $X$ at the origin is the identity, the transversal index
of $X'$ at ${\msf Z}(X')$  equals one hence
${\msf i}_{{\msf Z}(X')}(X')=\chi({{\msf Z}(X')})=\chi(\FF P^1 )$.
Thus in the real case this block is inessential (for $X'$) while it is
essential with index 2 in the complex one.  Moreover
$\ss\ll (2,\FF)$ is a quotient of $\G'$ because clearly it is a quotient of $\G$. 

From the blowup construction follows $\G'$ acts transitively on ${\msf Z}(X')$
since any linear vector field on $\FF$ belongs to $\G$, so $\Z {\G'}=\emp$.  
Therefore:
\begin{itemize} \item {\em  For the complex case  of
Theorem \ref{th:main},  the supplementary hypothesis {\em (*)} 
cannot be deleted even in the non-compact case.}

\end{itemize}

If we consider the solvable subalgebra $\G'_0$ of $\G'$,
corresponding to $\G_0 \zco\G$ defined by the supplementary condition
$b_{21}=0$, then ${\msf Z}(X')\zin {\msf Z}(\G'_0)\znoi\emp$ since
$\G'_0$ vanishes at the point of $\FF P^1$ associated to the second
axis. In turn, blowing up this common zero gives rise to a new
manifold endowed with a Lie algebra $\G''_0$, isomorphic to $\G_0$ and
$\G'_0$, and a vector field $X''$ tracked by $\G''_0$. Now ${\msf
  Z}(X'')$ is again essential, more exactly ${\msf i}_{{\msf
    Z}(X'')}(X'')$ equals $-1$ in the real case and 3 in the complex
one.  Therefore ${\msf Z}(\G''_0)\zin {\msf Z}(X'')\znoi\emp$.

For easily computing the index of $X$, $X'$ and $X''$, notice that as
a real vector field $X$ is outwardly transverse to the spheres $S^1
\zco \RR^2$ or $S^3\zco \CC^2$. Therefore in each case this index
equals the Euler characteristic of the ambient manifold.
\end{example}

\begin{example}   \mylabel{exM1A}

In a more general setting, let $\mcal P_n$ be the vector space of
homogeneous polynomials in $x_1 ,x_2$ of degree $n\zmai 1$ over $\FF$ and let
$\G$ be the $(n+5)$-dimensional Lie algebra of vector fields on
$\FF^2$ of the form
$$\zsu_{k,r=1,2}b_{kr}x_{k} {\frac {\zpar} {\zpar x_{r}}}
+\zf\zpu\left(x_1 {\frac {\zpar} {\zpar x_{1}}} +x_2 {\frac {\zpar}
  {\zpar x_{2}}}\right)$$
where $b_{k r}\zpe\FF$ and $\zf\zpe\mcal P_n$.

As in Example \ref{exM1}, set $X=x_1 {\frac {\zpar} {\zpar x_{1}}}+x_2
{\frac {\zpar} {\zpar x_{2}}}$. Then $\G$ tracks $X$.

Blowing up the origin in $\FF^2$ provides a new 2-manifold $M$ over
$\FF$ endowed with a Lie algebra $\G'\subset \V^\omega (M)$ and
a vector field $X'$ which is tracked by $\G'$. As before ${\msf
  Z}(X')=\FF P^1$ and ${\msf Z}(\G')=\emp$ so ${\msf Z}(\G')\zin {\msf
  Z}(X')=\emp$, while ${\msf i}_{{\msf Z}(X')}(X')$ equals zero if
$\FF=\RR$ and two when $\FF=\CC$.

Notice that the dimension of $\G'$ can be taken as large as
desired. Thus:
\begin{itemize} 
\item {\it In the noncompact complex case, when Theorem \ref{th:main}
fails the respective Lie algebra has $\ss\ll(2,\CC)$ as a quotient but
its dimension can be arbitrarily large} 
\end{itemize} 
(see Remark \ref{reMla} for the compact
case).

Of course one may consider the subalgebra $\G_0 \zco\G$ given by
the condition $b_{21}=0$ and do as in Example \ref{exM1}.
\end{example}

\begin{example}   \mylabel{exM2}

In $\FF^3$ with coordinates $x=(x_1 ,x_2 ,x_3 )$ let $S_{\FF}^2$ be
the ``sphere'' given by the equation $x_{1}^2 +x_{2}^2 +x_{3}^2 =1$.
This is the real sphere $S^2$ if $\FF=\RR$.  When $\FF=\CC$ it is a
noncompact complex 2-manifold whose underlying real
manifold is the tangent vector bundle   $TS^2$;  but $S_{\CC}^2$ is not
biholomorphic to $T\CC P^1$ because  $\CC P^1$ is 
never a complex submanifold of $\CC^3$.  On $S_{\FF}^2\zco\FF^3$
consider the tangent vector fields
$$X= {\frac {\zpar} {\zpar x_{3}}}-x_{3}\left(x_1 {\frac {\zpar}
{\zpar x_{1}}} +x_2 {\frac {\zpar} {\zpar x_{2}}}+x_3 {\frac {\zpar}
  {\zpar x_{3}}}\right),\quad Y= -x_2 {\frac {\zpar} {\zpar
    x_{1}}}+x_1 {\frac {\zpar} {\zpar x_{2}}}.$$

Denote by $\mcal P_k$ the space of homogeneous polynomials in $x_1
,x_2$ of degree $k$. Set 
\[
  \G_k =\{aY+\zf X\co a\zpe\FF,\zf\zpe\mcal P_k\},
\] 
which is a $(k+2)$-dimensional solvable Lie algebra that
tracks $X$.

On $S_{\FF}^2$ our $X$ has just two singular points $(0,0,\zmm 1)$,
each of them an essential block of index 1. Indeed, first observe that
functions $x_1 ,x_2$ can be regarded as coordinates of $S_{\FF}^2$
around $(0,0,\zmm 1)$ which we name $(u_1 ,u_1 )$.  As $X\zpu x_k
=-x_3 x_k$, $k=1,2$, up to sign the linear part of $X$ at $(0,0,\zmm
1)$ equals $u_1 {\frac {\zpar} {\zpar u_{1}}}+u_2 {\frac {\zpar}
 {\zpar u_{2}}}$.  Note that $\G_k$ vanishes at these points. Blowing
up $(0,0,1)$ and $(0,0,-1)$ gives rise to a 2-manifold $M$, a vector
field $X'$ with two isolated blocks $K_1 ,K_{2}$ associated to these
points, and a Lie algebra $\G'$ that is isomorphic to $\G$ and tracks
$X'$.
\begin{itemize} 
\item{\it But the behavior of the real and complex cases is quite different.}
\end{itemize}

Indeed, in the complex one $K_1$ and $K_{2}$ are diffeomorphic to $\CC P^1$ 
and are thus  essential blocks and $\G'$ vanishes somewhere in $K_1$ and in
$K_{2}$. In the real case $M$ is the Klein bottle, $K_1 ,K_2$ are
$S^1$ so non-essential blocks, and $\G'$ does not vanishes at any
point of $M$.
\end{example}

\begin{example}  \mylabel{exM3}

Let $\G$ be the Lie algebra on $\CC P^2$ of those projective vector
fields $Y$ that on $\CC^2 \zco \CC P^2$ write
$$Y=a_1 {\frac {\zpar} {\zpar z_{1}}}+a_2 {\frac {\zpar} {\zpar
    z_{2}}} +\zsu_{k,r=1,2}b_{kr}z_{k} {\frac {\zpar} {\zpar z_{r}}}$$
where $a_k ,b_{k r}\zpe\CC$ and $b_{21}=0$.  Then $\G$ is a 3-solvable
Lie algebra of dimension five and the vector field represented by
${\frac {\zpar} {\zpar z_{2}}}$ spans an ideal of dimension one.
Moreover $\G$ vanishes at the infinity point of the second axis
(belonging to $\CC P^2 -\CC^2$).

Now by blowing up this point one constructs a second complex surface
$M_1$, of Euler characteristic 4 and simply connected, endowed with a
Lie algebra $\G_1$ isomorphic to $\G$. Again there is some zero of
$\G_1$, which can be blown up to construct a simply connected manifold
$M_2$ of Euler characteristic 5, and a Lie algebra $\G_2\subset \V^\om
(M_2)$ isomorphic to $\G$ and so on. Therefore:

\begin{itemize}
\item{\it For any $m\zmai 3$ there exists a simply connected compact
complex 2-manifold of characteristic $m$ that supports a 3-solvable Lie
algebra of vector fields.}
\end{itemize}

A similar process can be started from the product of two copies of
$\CC P^1$, endowed each of them with a 2-dimensional noncommutative
Lie algebra, and the product algebra.

\end{example}

\begin{example}   \mylabel{exM4}

Here we show that for every integer $m$ and every positive integer $d$
there is a compact complex 2-manifold  $M$ and a solvable Lie algebra
$\A\subset \V^\om (M)$ such that 
\[
\chi (M) =m, \quad \dim_\CC\, \A =d,  \quad \dim_\CC\, \Z\A =1.
\]

 First consider a real or complex $n$-manifold\, $N$, with cotangent 
bundle $\zp\co T^\zas N\zfl N$.  On the manifold $T^* N$  define
the {\it Liouville 1-form} $\zr$ by 
\[
 \zr(v)=\za(\zp_{*}v), \qquad \big(\za\in T^* N, \ v \in T_{\za}(T^* N)\big)
\]
The {\it  
  Liouville symplectic form} on $T^*N$ is the exterior 
2-form  $\zw=d\zr$.  Given a 1-form $\zb$ on the manifold
$T^* N$, assumed analytic over the ground field, define the vector
field
\begin{equation}                \label{eq:x}
X_\zb\in \V^\om (T^*N), \qquad \iota_{X_\zb}\om =\zb
\end{equation}
%
where $\iota_{X_\zb}\om$ is the interior product  of  $\om$
by $X_\beta$.
 
Now let $N$ be a  1-dimensional complex manifold defined by a
orientable compact connected surface of genus 
$g\zmai 1$ endowed with a K\"ahler structure. 
The Dolbeault cohomology group of $N$ in dimension 1, which is
isomorphic to the singular cohomomlogy group $H^1 (N,\RR)$, has a
basis represented by $g$ holomorphic 1-forms 
\[\alpha_j:=\zl_j +i\zm_j.
\]

Using Equation (\ref{eq:x}), set 
\[
 X_j:=X_{\zp^* \za_j} \in \V^\om (T^*N),\qquad (j=1,\dots,g)\]
 and let $X_{g+1}$ denote the
radial vector field on $T^* N$.
By means of coordinates it is easily checked that $X_1 ,\dots,X_g$
and $X_{g+1}$ are tangent to the fibres $T_{p}^* N$,
($p\zpe N$),   with each $X_j$, $j=1,\dots,g$, constant and 
$X_{g+1}$ linear.   Moreover $[X_j ,X_{g+1}]=X_j$, $j=1,\dots,g$.

Let $\zp\co Q\zfl N$ be the compactification of $\zp\co T^\zas N\zfl N$ given
by Remark \ref{reMaa}. The vector fields $X_k$, $k=1,\dots,g+1$,  extend to holomorphic 
vector fields $\hat X_k \in \V^\om (Q)$ such that $Q_\zinf \subset \Z{\hat X_k}$ where 
$Q_\zinf$ denotes the infinity section of $Q$.    
It is easy to see that $\hat X_1 ,\dots,\hat X_{g+1}$ form a basis of a solvable
complex Lie algebra $\G\subset \V^\om (Q)$ of dimension $g+1$.  Evidently
$\Z\G$ is the union of the 1-dimensional complex submanifold $Q_\zinf$ and the 
image by the zero section of the set  of common zeros of $\za_1 ,\dots,\za_g$.
Because the holomorphy this last set is always finite, so we can reasonably  
write $\dim_\CC\, \Z\G =1$.

Note that  $\chi (Q)=4(1-g)$.  By 
blowing up  $r$ zeros of $\G$ we obtain a compact complex 2-manifold
$M$ with $\chi (M)=4(1-g)+r$ and a solvable Lie algebra $\G'\subset \V^\om (M)$
isomorphic to $\G$. Finally, take $g$ and $r$ such that $4(1-g)+r=m$ and 
$g\zmai d$, and a Lie subalgebra $\A\subset\G'$ of dimension $d$. 

\end{example}

\section{Consequences of tracking} \mylabel{seM3}

Throughout this section we assume:

\begin{itemize}

\item {\em $P$ is a real or complex $n$-manifold with empty boundary.}

 \item {\em $X, Y$ are differentiable vector fields on $P$ and $Y$
   tracks $X$ with tracking function $f$.}

\end{itemize}

When $P$ is complex and $X$ nontrivial on each connected component of
$P$, our function $f$ is holomorphic. Indeed, locally in coordinates
$f$ is meromorphic because $[Y,X]$ ``divided'' by $X$ equals $f$, so
holomorphic since it is continuous.  {\it Thus if $P$ is compact,
  connected and complex, the tracking function $f$ is constant.}

\bigskip

In the real case $f$ can be just continuous at some point. For
instance, on $\RR$ set $Y=x^4 {\frac {\zpar} {\zpar x}}$ and $X=g
{\frac {\zpar} {\zpar x}}$ where $g(x)=e^{-1/x}$ if $x>0$,
$g(x)=e^{-1/x^2 }$ if $x<0$ and $g(0)=0$. A computation shows that
$f(x)=x^2 -4x^3$ if $x>0$, $f(x)=2x-4x^3$ if $x<0$ and $f(0)=0$; hence
it is not derivable at the origin.
\bigskip

If $Y_1$ is another differentiable vector field tracking $X$ with
function $f_1$, and $f,f_1$ are at least $C^1$, which automatically
holds if $\FF=\CC$, from the Jacobi identity follows $[Y,Y_1 ]$ tracks
$X$.
\bigskip

By definition the {\em dependency set} of $X$ and $Y$ (over the ground
field $\FF$) is
\[
  \msf D (X,Y)=\big\{p\in M\co (X\wedge_\FF Y)(p)=0\big\}.
\]
\begin{proposition} \mylabel{prM1} 
If $Y$ tracks $X$ then ${\msf Z}(X)$ and $\msf D (X, Y)$ are $X$- and
$Y$-invariant.
\end{proposition}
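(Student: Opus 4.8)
The plan is to track how the vector fields $X$ and $Y$, and the bivector $X\wedge Y$, evolve along the flows of $X$ and of $Y$, reducing each invariance claim to a linear ordinary differential equation in a \emph{fixed} vector space. First, ${\msf Z}(X)$ is $X$-invariant for the trivial reason that every zero of $X$ is a fixed point of its own flow, so its orbit is a single point. The remaining three claims rest on one elementary computation.

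Let $\psi_t$ denote the local flow of $Y$ and fix $p\in P$. Set $V(t):=(d\psi_{-t})_{\psi_t(p)}\,X(\psi_t(p))\in T_pP$, i.e. the value of $X$ along the $Y$-orbit of $p$ transported back to $T_pP$. The standard flow--derivative formula $\frac{d}{dt}(\psi_t^{*}X)=\psi_t^{*}[Y,X]$, combined with the tracking hypothesis $[Y,X]=fX$, yields the scalar linear ODE
\[
\dot V(t)=f(\psi_t(p))\,V(t)\qquad\text{in the fixed space } T_pP,
\]
whose solution is $V(t)=c(t)\,X(p)$ with $c(t)=\exp\!\big(\int_0^t f(\psi_s(p))\,ds\big)$. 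The crucial feature is that $c(t)$ never vanishes. Since $d\psi_{-t}$ is a linear isomorphism, $X(\psi_t(p))=0$ if and only if $V(t)=0$, i.e. if and only if $X(p)=0$; this gives the $Y$-invariance of ${\msf Z}(X)$.

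For $\msf D(X,Y)$ under the $Y$-flow I would additionally use that $Y$ is invariant under its own flow, so $(d\psi_{-t})_{\psi_t(p)}Y(\psi_t(p))=Y(p)$. Applying the induced isomorphism $\Lambda^2 d\psi_{-t}$ to $(X\wedge Y)(\psi_t(p))$ then produces $c(t)\,(X\wedge Y)(p)$, which vanishes exactly when $(X\wedge Y)(p)$ does, proving $Y$-invariance of $\msf D(X,Y)$. For its $X$-invariance, let $\phi_t$ be the flow of $X$. Now $X(p)$ is carried to itself, while the evolution of $Y$ is governed by $[X,Y]=-fX$; the same flow--derivative formula gives $(d\phi_{-t})_{\phi_t(p)}Y(\phi_t(p))=Y(p)+\lambda(t)X(p)$ with $\lambda(t)=-\int_0^t f(\phi_s(p))\,ds$. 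Wedging against the constant $X(p)$ annihilates the $X(p)$-term, so $\Lambda^2 d\phi_{-t}$ sends $(X\wedge Y)(\phi_t(p))$ to the \emph{constant} bivector $(X\wedge Y)(p)$; hence $\msf D(X,Y)$ is $X$-invariant.

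The only delicate point is regularity: since $f$ is merely continuous one may not differentiate the bracket a second time, but this is harmless because the right-hand side $[Y,X]=fX$ is itself continuous, so $t\mapsto V(t)$ is genuinely $C^1$ and the scalar ODE has the stated nonvanishing solution. All assertions are understood on the maximal interval of definition of the relevant orbit, which is precisely what $X$- and $Y$-invariance require.
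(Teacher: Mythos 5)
Your proof is correct, and while it rests on the same analytic engine as the paper's argument --- the identity $[Y,X]=fX$ turns the evolution of $X$ along a $Y$-trajectory into a linear equation solved by $e^{\int f}$ times the initial value, a factor that never vanishes --- the packaging is genuinely different. The paper implements this in flow-box coordinates: after disposing separately of the case $Y(\gamma(t_0))=0$, it straightens $Y$ to $\partial/\partial x_1$, writes $X=\sum_k g_k\,\partial/\partial x_k$, solves $\partial g_k/\partial x_1=fg_k$ as $g_k=h_k(x_2,\dots,x_n)e^{\varphi}$, and, since that computation is purely local, globalizes along the integral curve by an open-and-closed connectedness argument; the two claims about $\msf D(X,Y)$ are then dispatched ``in the same way'' using $L_X(X\wedge Y)=0$ and $L_Y(X\wedge Y)=f\,X\wedge Y$. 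Your invariant formulation --- transporting $X$ by $d\psi_{-t}$ into the fixed space $T_pP$ and invoking uniqueness for the linear ODE $\dot V=f(\psi_t(p))V$ --- gives the conclusion on the whole domain of the integral curve at once, with no case distinction at zeros of $Y$, no coordinates, and no globalization step, and it makes the bivector statements transparent: constant transport under the $X$-flow, scaling by $c(t)\neq 0$ under the $Y$-flow. Your regularity remark is also exactly right: $X,Y\in C^1$ and $f$ continuous suffice, because the bracket is differentiated only once. Two comments. First, in the complex case the paper's notion of orbit uses complex time (integral curves defined on connected open subsets of $\CC$), whereas you work on real intervals; this costs one extra sentence, since $iY$ also tracks $X$ (with tracking function $if$), $\msf D(X,iY)=\msf D(X,Y)$, and complex-time orbits are swept by compositions of real-time flow segments of $Y$ and $iY$. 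Second, what the paper's coordinate route buys is reusability: the explicit local form $g_k=h_k e^{\varphi}$ is invoked again in the proof of Theorem \ref{th:main}, where the zeros of $X$ on a transversal to $Y$ are identified with the common zeros of the analytic functions $h_k$; your streamlined argument, being coordinate-free, does not produce that normal form.
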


\begin{proof}  Evidently ${\msf Z}(X)$ is $X$-invariant. Let us see
  its $Y$-invariance. Consider an integral curve $\zg\co A\zfl P$ of
  $Y$ where $A$ is a connected open set $\FF$.  Suppose $\zg(t_0 )\zpe
  {\msf Z}(X)$; then $\zg(t)\zpe{\msf Z}(X)$ for any $t\zpe A$
  sufficiently close to $t_0$. Indeed, if $Y(\zg(t_0 ))=0$ it is
  obvious; otherwise as the statement is local by means of suitable
  coordinates we may assume $P$ is a product of intervals ($\FF=\RR$)
  or a polydisk ($\FF=\CC$) always centered at $\zg(t_0
  )=(0,\dots,0)$, $Y= {\frac {\zpar} {\zpar x_{1}}}$ and
  $X=\zsu_{k=1}^n g_k {\frac {\zpar} {\zpar x_{k}}}$. Now $[Y,X]=fX$
  means

\begin{equation}                \label{eqM1}
{\frac {\zpar g_k} {\zpar x_{1}}}=fg_{k},\quad k=1,\dots,n.
 \end{equation}

 Since $f$ is continuous ($\FF=\RR$) or holomorphic ($\FF=\CC$) the
 general solution to equation (\ref{eqM1})
 $$g_k (x)=h_k (x_2 ,\dots,x_n )e^\zf,\quad k=1,\dots,n,$$
 where $ {\frac {\zpar\zf} {\zpar x_{1}}}=f$ and $\zf(x)=0$ whenever
 $x_1 =0$.  But $\zg(t_0 )=0$ so each $h_k (0,\dots,0)=0$ and $X$
 vanishes along the first axis.

Therefore the set $A'=\{t\zpe A \co \zg(t)\zpe{\msf Z}(X)\}$ is open
and closed, so $A'=A$ or $A'=\emp$, which proves the $Y$-invariance of
${\msf Z}(X)$.

The $X$- and $Y$-invariance of $\msf D (X, Y)$ is proved in the same
way by taking into account that $L_X (X\wedge_\FF Y)=0$ and letting  $L_Y
(X\wedge_\FF Y)=f X\wedge_\FF Y$.
\end{proof}

\section{Vanishing of the index and other results}   \mylabel{seM4}

In the first lemma of this section we assume:
{\it\begin{itemize} 
\item $P$ is a real $n$-manifold
  
\item $X\in \V^{\zinf} (P)$

\item $K$ is an $X$-block and $U\subset P$ is isolating for $(X, K)$.
\end{itemize}}

\begin{lemma}           \mylabel{leM1}

Assume:
\begin{description}

\item[(a)] $K$ is a compact submanifold,

\item[(b)] $X_1, \dots,X_r\in \V^{\zinf} (U)$ are tangent to $K$, and
  their values are linearly independent at each point of $K$,

\item[(c)] $ \displaystyle X = \sum_{j=1}^rf_jX_j$ on $U$.
 
\end{description} 
Let $\mcal D\subset \V (M)$ be a neighborhood of $X$ in the
compact-open topology.  Then there exists $X'\in \mcal D$ such that:
\[X' = X \ \,\text{on} \,M\,\verb=\=\,U, \qquad 
{\msf Z}(X') \cap U =\emp.
\]
  Then ${\msf i}_K (X)=0$.
\end{lemma}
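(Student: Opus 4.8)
The content of the statement is the construction of the zero-free field $X'$: once such an $X'\in\mcal D$ is produced, it is by definition a legitimate field computing ${\msf i}(X,U)$, and since it has no zero in $U$ its Poincar\'e--Hopf index there is $0$; as the index is independent of the admissible approximation, this forces ${\msf i}_K(X)={\msf i}(X,U)=0$. So the plan is to build $X'$, exploiting that near $K$ the hypotheses force $X$ to be a section of a \emph{proper} subbundle whose zeros along $K$ are normally degenerate.

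First I would localize. By (b) the values $X_1(p),\dots,X_r(p)$ are independent for $p\in K$, hence by continuity on an open set $V$ with $K\subset V\subset\overline V\subset U$, on which they span a rank-$r$ subbundle $\mcal E\subset TV$; by (c), $X|_V$ is a section of $\mcal E$, say $X=\sum_j f_jX_j$. Evaluating on $K$ and using independence, all $f_j$ vanish on $K$. Since the $X_j$ are tangent to $K$, we have $\mcal E|_K\subset TK$, so $r\le\dim K$. If $K$ is a proper submanifold this gives $r<n=\dim P$, so $\mcal E$ is a proper subbundle. (If instead $r=n$, then $\dim K=n$, so $K$ is a parallelizable union of closed components of $P$; then $\chi(K)=0$ and Poincar\'e--Hopf already gives ${\msf i}_K(X)=\chi(K)=0$, and there is nothing to prove.)

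The structural fact I would use is that the normal derivative of $X$ along $K$ vanishes. Differentiating $X=\sum_j f_jX_j$ at $p\in K$ and using $f_j(p)=0$ gives, for every $v\in T_pP$,
\[
DX(p)\,v=\sum_j\bigl(df_j(p)\,v\bigr)\,X_j(p)\ \in\ \mcal E_p\subset T_pK .
\]
Thus $X$ has no first-order component transverse to $K$, which is exactly what permits pushing $X$ off the zero section. Concretely I would choose a vector field $W$ on a neighborhood of $K$ with $W(p)\notin\mcal E_p$ everywhere (so $W$ is nowhere zero), a cutoff $\phi\colon P\to[0,1]$ with $\phi\equiv1$ near $K$ and $\mathrm{supp}\,\phi\subset V$, and set $X':=X+\eps\,\phi\,W$. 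Then $X'=X$ off $V$, hence on $M\setminus U$. Where $\phi\equiv1$ the field $X'=X+\eps W$ has a nonzero component transverse to $\mcal E$ (since $X\in\mcal E$ but $W\notin\mcal E$), so $X'\neq0$. On the compact transition region $X\neq0$, because the isolating property gives $\Z{X}\cap\overline V=K\subset\{\phi\equiv1\}$, so $|X|\ge\del>0$ there and $|X'|\ge\del-\eps\sup|W|>0$ for small $\eps$; and on $U\setminus V$ we have $X'=X\neq0$. Hence $\Z{X'}\cap U=\emp$, while $\|X'-X\|$ is as small as desired, so $X'\in\mcal D$.

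The main obstacle is the global existence of the transverse field $W$, i.e.\ a nowhere-zero section of $TV/\mcal E$ near $K$: this can be obstructed by the topology of the normal bundle of $K$ (already for a one-sided curve in a surface). Equivalently, producing a zero-free $X'$ amounts to the vanishing of an obstruction class that \emph{is} the index, so the genuine work is the index computation itself, made possible by the vanishing of the normal derivative above together with $r<n$. When a transverse $W$ is unavailable one must instead perturb through the coefficients $f_j$ (which vanish on $K$), choosing the perturbation direction so that it never cancels $X$ in the transition zone; in every case it is the isolating hypothesis that furnishes the lower bound on $|X|$ off a neighborhood of $K$ preventing the cutoff from creating spurious zeros, and checking that these choices patch to a single zero-free $X'$ on all of $U$ is the delicate point.
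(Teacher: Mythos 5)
Your overall strategy --- perturb $X$ near $K$ into a zero-free field, exploiting that $X$ is a section of the subbundle $\mcal E$ spanned by $X_1,\dots,X_r$ and that the coefficients $f_j$ vanish on $K$ --- is the same as the paper's, and your reduction of the index claim to the construction of $X'$ is fine. But your construction is incomplete at exactly the point you flag: it needs a vector field $W$ with $W(p)\notin\mcal E_p$ at every point near $K$, i.e.\ a nowhere-zero section of the quotient bundle of $TP|_K$ by $\mcal E|_K$, and such a section can fail to exist (your own example: $K$ a one-sided curve in a surface, where this quotient is the M\"obius line bundle). Your fallback --- ``perturb through the coefficients $f_j$'' --- is never carried out; you yourself call the patching ``the delicate point,'' and it is genuinely problematic: a purely tangential perturbation $(f_1+\eps\varphi)X_1+\sum_{j\ge 2}f_jX_j$ creates new zeros in the transition zone whenever $f_1$ changes sign across $K$ (e.g.\ $X=(1-r^2)\,\partial_\theta$ around the unit circle in $\R 2$), so neither of your two mechanisms alone covers all cases, and no unified recipe is given. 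This is a genuine gap, not a routine verification.

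The paper closes it with one extra idea: identify a tubular neighborhood of $K$ with a vector bundle $\pi\co E\to K$ and perturb by $\eps\varphi_a(X_1+R)$, where $R$ is the \emph{radial} vector field of $E$. Unlike your $W$, the field $R$ exists canonically for any bundle (no orientability or nonvanishing-section hypothesis); it vanishes exactly on $K$ and is tangent to the fibres. For small $a$ the span of $X_1(e),\dots,X_r(e)$ meets each fibre's tangent space only in $0$ (since the $X_j$ are tangent to $K$ along $K$), so at a zero of $X'$ where $\varphi_a\neq 0$ the fibre component $\eps\varphi_a R$ must vanish separately, forcing the point onto the zero section $K$, where $X'=\eps\varphi_a X_1\neq 0$ because all $f_j$ vanish on $K$; and where $\varphi_a=0$ one has $X'=X\neq 0$ on $U$ off $K$ by the isolating hypothesis. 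In short, the perturbing field need not be nonzero near $K$; it only needs to be nonzero where $X'$ could otherwise vanish. Choosing it to vanish on $K$ (the radial part) and compensating along $K$ with the tangential $X_1$ dissolves the topological obstruction your approach runs into.
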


\begin{proof}

Take a tubular neighborhood $ W$ of $K$ such that $W \subset U$ and
identify it with an orthogonal vector bundle $\pi\co E \to K$, with
the norm in each fibre denoted by $\|\cdot\|$.  Set
$ E_a := \{e \in E \co \|e\| < a\}$ for each $a >0$,
so that $ \ov E_a \subset U $.
 Let $\varphi_a\co M \to \Rp$ be a non-negative function with support
 in $E_a $ such that $\varphi_a^{-1} (0)\cap K = \emp$ . If $ a $ is
 small enough, for each $ e \in E_a $ the vector subspace spanned by $
 X_1 (e),\dots, X_r (e) $ is ”almost transverse” to the fibre of $ e
 $, so its intersection with $T_e (\pi^{−1} (π(e))) $ equals $ \{0\}$.  
 Let $R$ denote the radial vector field on $E$.

For sufficiently small $ \eps > 0 $, the vector field $X'\in \V (M)$
defined as
\[ X'|\,U:= X + \eps\varphi_a {(X_{1}+R)},   \qquad   X'|\, M\verb=\=W :X|\, M\verb=\=W 
\]
has the required properties.
\end{proof}


Let us recall some well known facts on jets of vector fields useful later on. 
Consider a set $\mathcal B$ of vector fields on a manifold $Q$.
Given $p\zpe Q$ and $k\geq 0$  set                    
${\mathcal B}_{k}(p)\co=\{Y\zpe\mathcal B\co j^{k}_{p}Y=0\}$, while
${\mathcal B}_{-1}(p)\co=\mathcal B$ and
${\mathcal B}(p)\co=\{Y(p) \co Y\zpe\mathcal B\}\zco T_p Q$. Every 
${\mathcal B}_{k-1}(p)/{\mathcal B}_{k}(p)$, $k\geq 0$, can be regarded as a
subset of $T_{p}Q\zte S^{k}(T_{p}^{*}Q)$ and ${\mathcal
B}_{-1}(p)/{\mathcal B}_{k}(p)$ as a subset of the set of all
polynomial vector fields on $T_{p}Q$ of degree $\leq k$. When
${\mathcal B}$ is a Lie algebra $[{\mathcal B}_{k}(p), {\mathcal
B}_{r}(p)]\zco {\mathcal B}_{k+r}(p)$, $k+r\geq -1$, therefore
each ${\mathcal B}_{k}(p)$, $k\geq 0$, is a Lie algebra and every
${\mathcal B}_{k+s}(p)$, $k,s\geq 0$, an ideal of 
${\mathcal B}_{k}(p)$. If $Q$ is connected and ${\mathcal B}$ a finite
dimensional analytic Lie algebra then ${\mathcal B}_{k}(p)$, $k\geq 1$, 
is nilpotent and ${\mathcal B}_{r}(p)=0$ for some $r$.

A (piecewise differentiable) {\it curve tangent to $\mathcal B$} is a
finite family $\mcal C$ of integral curves $\zg_j \co A_j \zfl Q$,
$j=1,\dots,k$, of elements of $\mcal B$ defined on connected open
subsets of $\FF$, such that $A_r \zin A_{r+1}\znoi\emp$,
$r=1,\dots,k-1$.  We say that $\mcal C$ {\it joins} $p$ to $q$ if
$p\zpe A_1$ and $q\zpe A_k$.  The {\it $\mcal B$-orbit} of $p$ is the
set of all points $q\zpe Q$ joined to $p$ by some curve tangent to
$\mcal B$. 

When ${\mathcal B}(p)= T_p Q$ it is easily seen that $p$ belongs to
the interior of its ${\mathcal B}$-orbit. 
If $\mcal B$ is a finite dimensional Lie algebra then the
$\mcal B$-orbit of $p$ is a submanifold of $P$, not always regular,
whose dimension equals that of the vector subspace ${\mathcal
  B}(p)\zco T_p Q$, \cite{Palais57} (a submanifold of $Q$ is called
{\it embedded or regular} when its topology as submanifold and that as
topological subspace of $Q$ coincide).

From Lie's work \cite{Lie24} follows :

\begin{lemma}           \mylabel{leNew1}

Let $Q$ be an 1-dimensional connected manifold and let $\mcal B\zco\V^{\zw}(Q)$
be a nonzero finite-dimensional Lie algebra. Then $\mcal B$ is isomorphic to
either $\FF$, the affine algebra of $\FF$, or $\ss\ll(2,\FF)$.  
\end{lemma}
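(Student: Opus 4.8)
\emph{Plan.} Since $\mcal B\znoi\{0\}$ and a nonzero analytic vector field on the connected manifold $Q$ cannot vanish identically, I can fix $p\zpe Q$ together with an analytic chart $x$ with $x(p)=0$ at which some element of $\mcal B$ does not vanish; thus $\mcal B(p)=T_pQ$ and $0$ is attained as an order of vanishing. Restriction to this chart is injective on $\mcal B$ (identity theorem for analytic functions plus connectedness of $Q$), so the whole problem reduces to a computation with the Taylor expansions at $p$ of the functions $g$ appearing in $Y=g\,\pd x$. Writing $\mathrm{ord}_p(Y)$ for the order of vanishing of $g$ at $0$, the filtration of the excerpt satisfies $\mcal B_{k}(p)=\{Y\zpe\mcal B\co \mathrm{ord}_p(Y)\zmai k+1\}$. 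Because $\dim_\FF Q=1$, each quotient $\mcal B_{k-1}(p)/\mcal B_{k}(p)$ embeds in the one-dimensional space $T_pQ\zte S^{k}(T_p^{*}Q)$ and so has dimension $\zmei 1$; since $\mcal B_r(p)=0$ for some $r$, the dimension of $\mcal B$ equals the number of orders actually attained, $|S|$, where $S\co=\{\mathrm{ord}_p(Y)\co 0\znoi Y\zpe\mcal B\}$.

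The engine of the argument is the identity $[\,g\,\pd x,\,h\,\pd x\,]=(gh'-g'h)\,\pd x$: if $g=cx^{a}+\cdots$ and $h=dx^{b}+\cdots$ with $c,d\znoi 0$, then $gh'-g'h=cd\,(b-a)\,x^{a+b-1}+\cdots$, so $\mathrm{ord}_p$ of the bracket equals $a+b-1$ whenever $a\znoi b$, and is strictly larger when $a=b$. Using that $0\zpe S$, bracketing an order-$m$ field repeatedly against an order-$0$ field lowers the order by one at each step, whence $m:=\max S$ forces $S=\{0,1,\dots,m\}$. If $m\zmai 3$ then $2\zpe S$, and the bracket of an order-$2$ field with an order-$m$ field has order $m+1>m$ (its leading coefficient is a nonzero multiple of $m-2$), contradicting $m=\max S$. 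Hence $m\zmei 2$ and $\dim\mcal B\zmei 3$.

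It remains to identify $\mcal B$ by its dimension. If $\dim\mcal B=1$ it is abelian, hence $\cong\FF$. If $\dim\mcal B=2$ then $S=\{0,1\}$ and $[Y_0,Y_1]$ has order $0$, so $\mcal B$ is nonabelian and therefore isomorphic to the unique two-dimensional nonabelian algebra, the affine algebra of $\FF$. If $\dim\mcal B=3$ then $S=\{0,1,2\}$; choosing $Y_k$ of order $k$, the brackets $[Y_1,Y_2]$, $[Y_0,Y_2]$, $[Y_0,Y_1]$ have orders $2,1,0$ respectively, which shows in turn that $Y_2$, then $Y_1$, then $Y_0$ lie in $[\mcal B,\mcal B]$; thus $\mcal B$ is perfect, hence simple (a three-dimensional perfect Lie algebra cannot be solvable, and a non-solvable three-dimensional Lie algebra is simple).

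The last and most delicate point is to pin down the \emph{split} form, and this is where I expect the main obstacle. Over $\CC$ it is immediate, since $\ss\ll(2,\CC)$ is the only three-dimensional simple complex Lie algebra. Over $\RR$ the two candidates are $\ss\ll(2,\RR)$ and $\mathfrak{so}(3,\RR)$; here I use that $\mcal B_0(p)=\langle Y_1,Y_2\rangle$ is a two-dimensional (isotropy) subalgebra, whereas $\mathfrak{so}(3,\RR)$ under the cross product admits no two-dimensional subalgebra, forcing $\mcal B\cong\ss\ll(2,\RR)$. In all cases $\mcal B\cong\ss\ll(2,\FF)$, which completes the classification.
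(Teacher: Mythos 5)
Your proof is correct, and it takes a genuinely different route from the paper --- indeed the paper gives no argument at all, stating the lemma as a direct consequence of Lie's classification of finite-dimensional Lie algebras of vector fields on one-dimensional manifolds and citing \cite{Lie24}. What you have produced is a self-contained reconstruction of that classical result. Your two tools are the bracket identity $[g\,\partial_x,\,h\,\partial_x]=(gh'-g'h)\,\partial_x$, which makes the order of vanishing at $p$ additive (order exactly $a+b-1$ when $a\znoi b$), and the jet filtration $\mcal B_k(p)$ with $\mcal B_r(p)=0$ for some $r$, which the paper sets up in Section \ref{seM4} immediately before the lemma, so you may legitimately invoke it. From these you correctly deduce $\dim\mcal B=\vert S\vert$, that $S=\{0,1,\dots,m\}$, and that $m\zmei 2$ (otherwise bracketing an order-$2$ field with an order-$m$ field produces order $m+1$, beating the maximum); the identification by dimension is then clean: abelian, hence $\FF$, in dimension one; nonabelian, hence affine, in dimension two; perfect, hence simple, in dimension three, with $\mathfrak{so}(3,\RR)$ ruled out because $\mcal B_0(p)$ is a two-dimensional subalgebra and $\mathfrak{so}(3,\RR)$ has none. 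As for what each approach buys: the paper's citation is economical and implicitly carries the stronger local normal-form content of Lie's theorem (local conjugacy of the algebra into $\langle \partial_x,\ x\partial_x,\ x^2\partial_x\rangle$), which is what Lemmas \ref{leM3} and \ref{leM4} exploit elsewhere; your argument is elementary, uniform over $\RR$ and $\CC$, uses only facts already stated in the paper, and proves exactly what the lemma asserts --- the abstract isomorphism type --- without appeal to any external classification.
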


In the next three results assume:
{\it \begin{itemize} 
\item $N$ is a connected complex surface.
  
\item $\A\zco \V^{\zw} (N)$ is a Lie algebra isomorphic to $\ss\ll
  (2,\CC)$ and $X\in \V^{\zw} (N)$ is nontrivial.
\end{itemize}}

\begin{lemma}           \mylabel{leM2}

The points of ${\msf Z}(\A)$ are isolated, and ${\msf Z}(\A)\zco
{\msf Z}(X)$ when $[X,\mcal A]=0$.
\end{lemma}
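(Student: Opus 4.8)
The plan is to treat the two assertions separately, establishing the isolation of $\msf Z(\A)$ first and then deducing the inclusion. Fix $p\in\msf Z(\A)$ and study the jets of $\A$ at $p$ using the filtration $\A_k(p)$ recalled above. Since every element of $\A$ vanishes at $p$ we have $\A_0(p)=\A$, and by the bracket relation $[\A_0(p),\A_1(p)]\subset\A_1(p)$ the subspace $\A_1(p)$ is an ideal of $\A\cong\ss\ll(2,\CC)$. Simplicity of $\ss\ll(2,\CC)$ forces $\A_1(p)=0$ or $\A_1(p)=\A$. The latter can be ruled out: if $\A_1(p)=\A$, then since $\ss\ll(2,\CC)$ is perfect, $\A=[\A_1(p),\A_1(p)]\subset\A_2(p)$, and inductively $\A_k(p)=\A$ for all $k$; but the jet facts above guarantee $\A_r(p)=0$ for some $r$, a contradiction. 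Hence $\A_1(p)=0$.

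Thus the linear isotropy representation identifies $\A$ with its image $\A_0(p)/\A_1(p)$, an injective copy of $\ss\ll(2,\CC)$ inside $\mathfrak{gl}(T_pN)$. Composing with the trace gives a Lie algebra homomorphism to the abelian $\CC$, which must vanish because $\ss\ll(2,\CC)$ is perfect; therefore the image lies in the trace-free part and, being three-dimensional, fills it entirely. In particular some $Y\in\A$ has linear part at $p$ equal to an invertible trace-free matrix (e.g.\ $\mathrm{diag}(1,-1)$), so $dY(p)$ is nonsingular and $p$ is a nondegenerate, hence isolated, zero of $Y$. Since $\msf Z(\A)\subset\msf Z(Y)$, the point $p$ is isolated in $\msf Z(\A)$; as $p$ was arbitrary, $\msf Z(\A)$ is discrete.

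For the second assertion, assume $[X,\A]=0$. Then $X$ tracks every $Y\in\A$ with tracking function $0$, so Proposition \ref{prM1} shows each $\msf Z(Y)$ is $X$-invariant, whence $\msf Z(\A)=\bigcap_{Y\in\A}\msf Z(Y)$ is $X$-invariant. If $p\in\msf Z(\A)$, then its entire $X$-orbit lies in $\msf Z(\A)$; but $\msf Z(\A)$ is discrete by the first part, so the orbit reduces to $\{p\}$, which is possible only if $X(p)=0$. Therefore $\msf Z(\A)\subset\msf Z(X)$.

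The main delicate point is the first paragraph: verifying that the algebraic dichotomy coming from simplicity, combined with the analytic finiteness $\A_r(p)=0$, genuinely excludes the case $\A_1(p)=\A$. Everything else is either a short representation-theoretic computation in $\mathfrak{gl}(2,\CC)$ or a direct application of Proposition \ref{prM1}.
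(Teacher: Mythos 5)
Your proof is correct and takes essentially the same route as the paper's (very compressed) argument: simplicity of $\A$ forces $\A_{1}(p)=0$, so $\A_{0}(p)/\A_{1}(p)=\ss\ll(T_{p}N)$, which yields an element of $\A$ with invertible linear part at $p$ and hence isolation of $p$ in ${\msf Z}(\A)$; then $X$-invariance of the discrete set ${\msf Z}(\A)$ (from $[X,\A]=0$) forces $X(p)=0$. Your write-up simply fills in the steps the paper dismisses with ``since $\A$ is simple'' and ``clearly,'' including the useful observation that $\A_{1}(p)=\A$ is excluded by perfectness together with the vanishing of some jet ideal $\A_{r}(p)$.
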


Indeed, if $p\zpe{\msf Z}(\mcal A)$ then since $\A$ is simple
$\A_{0}(p)/\A_{1}(p)$ equals the special linear algebra $\ss\ll(T_p
N)$, so $p$ is isolated in ${\msf Z}(\mcal A)$. Besides, if $[X,\A]=0$
being isolated clearly implies $X(p)=0$.

\begin{lemma}           \mylabel{leM3}

Assume $[X,\mcal A]=0$. Consider a point $p\zpe N$ such that
$X(p)\znoi 0$. Then around $p$ there exist coordinates $z=(z_1 ,z_2
)$, with $p\zeq 0$, such that the vector fields
$$Y_1 = {\frac {\zpar} {\zpar z_{1}}},\quad Y_2 =z_{1}{\frac {\zpar}
{\zpar z_{1}}} +a{\frac {\zpar} {\zpar z_{2}}}, \quad Y_3
=z^{2}_{1}{\frac {\zpar} {\zpar z_{1}}} +2az_{1}{\frac {\zpar} {\zpar
    z_{2}}}\, ,\,\, a\zpe\CC,$$
are a basis of the restriction of $\A$ to the domain of coordinates,
and $X={\frac {\zpar} {\zpar z_{2}}}$.
\end{lemma}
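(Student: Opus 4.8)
The plan is to straighten $X$ and then reduce everything to the one–dimensional classification of Lemma \ref{leNew1}. Since $X$ is nontrivial and $X(p)\neq 0$, the holomorphic flow–box theorem supplies coordinates $(w_1,w_2)$ centered at $p$ in which $X=\partial/\partial w_2$. The hypothesis $[X,\A]=0$ says that every $Y\in\A$ is invariant under the flow of $X$: writing $Y=P\,\partial/\partial w_1+Q\,\partial/\partial w_2$, the relation $[\partial/\partial w_2,Y]=0$ forces $\partial P/\partial w_2=\partial Q/\partial w_2=0$, so $P=P(w_1)$ and $Q=Q(w_1)$ depend on $w_1$ alone.

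Next I would exploit the projection $\rho\co Y\mapsto P(w_1)\,\partial/\partial w_1$. Because the coefficients are $w_2$–independent, a direct bracket computation shows $\rho$ is a Lie algebra homomorphism from $\A$ into the holomorphic vector fields on the $w_1$–disk, and its kernel $\{Q(w_1)\,\partial/\partial w_2\}$ is an abelian ideal. As $\A\cong\ss\ll(2,\CC)$ is simple, this kernel is $0$, so $\rho$ is injective and $\rho(\A)\cong\ss\ll(2,\CC)$. I would then check that $\rho(\A)$ is transitive at the origin: if every element vanished there, then $\rho(\A)$ would sit inside $\rho(\A)_0(0)$, which is solvable, since by the jet facts recalled above its quotient by the nilpotent $\rho(\A)_1(0)$ embeds in the abelian algebra $\mathfrak{gl}(T_0)\cong\CC$; this contradicts the non–solvability of $\ss\ll(2,\CC)$. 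By Lemma \ref{leNew1} together with the classical normal forms of Lie \cite{Lie24}, after a biholomorphic change $z_1=z_1(w_1)$ — which, leaving $w_2$ untouched, preserves $X=\partial/\partial z_2$ — the image $\rho(\A)$ is spanned by $\partial/\partial z_1$, $z_1\,\partial/\partial z_1$, $z_1^2\,\partial/\partial z_1$.

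Finally I would lift this normal form. Let $Y_1,Y_2,Y_3\in\A$ be the unique elements projecting to $\partial/\partial z_1$, $z_1\,\partial/\partial z_1$, $z_1^2\,\partial/\partial z_1$; by injectivity of $\rho$ they form a basis and have the shape $Y_i=\rho(Y_i)+q_i(z_1)\,\partial/\partial z_2$. A shear $z_2\mapsto z_2+h(z_1)$, which again fixes $X=\partial/\partial z_2$, replaces $q_i$ by $q_i+P_i\,h'$, where $P_i$ is the $\partial/\partial z_1$–coefficient of $\rho(Y_i)$; choosing $h'=-q_1$ achieves $q_1\equiv 0$. Because $\rho$ is an isomorphism, the relations $[Y_1,Y_2]=Y_1$, $[Y_1,Y_3]=2Y_2$, $[Y_2,Y_3]=Y_3$ hold, and reading off their $\partial/\partial z_2$–components (each equal to $P_i q_j'-P_j q_i'$) gives the system $q_2'-z_1q_1'=q_1$, $q_3'-z_1^2q_1'=2q_2$, $z_1q_3'-z_1^2q_2'=q_3$. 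With $q_1\equiv0$ these collapse to $q_2'=0$, $q_3'=2q_2$, $q_3=z_1q_3'$, whence $q_2=a$ is constant and $q_3=2az_1$ for some $a\in\CC$. This is exactly the asserted basis, with $X=\partial/\partial z_2$.

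The routine parts are the flow–box step and the bracket bookkeeping; the conceptual crux is the reduction through $\rho$ — deriving injectivity from simplicity and transitivity from non–solvability — so that the one–variable classification applies. The only genuinely delicate point is to arrange that all coordinate changes (the straightening $z_1=z_1(w_1)$ and the shear $z_2\mapsto z_2+h(z_1)$) respect $X=\partial/\partial z_2$, which is why each is taken to involve $w_1$ alone.
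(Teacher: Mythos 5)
Your proposal is correct and follows essentially the same route as the paper: straighten $X$, pass to the one-dimensional quotient (your projection $\rho$ is exactly the paper's projection of $\A$ onto the local quotient of $N$ by $X$), use simplicity of $\ss\ll(2,\CC)$ to rule out a degenerate projection, invoke Lie's one-variable normal form, and then normalize $Y_1$ by a shear in $z_2$ before solving the bracket equations for the remaining coefficients. The only differences are expository: you spell out the transitivity check and the ODE system that the paper compresses into ``a straightforward computation.''
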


\begin{proof}

As $\A$ is simple its projection on the local quotient $N'$ of $N$ by
(the foliation associated to) $X$ is either zero or a Lie algebra
$\A'$ isomorphic to $\A$. If zero, each $Y\zpe\A$ is proportional to
$X$, which is incompatible with the hypothesis $[X,\A]=0$. Therefore
as $\dim\, N'=1$ there exists a coordinate $z_1$, around the projection
$p'$ of $p$ in $N'$, such that $z_1 (p')=0$ and
$${\frac {\zpar} {\zpar z_{1}}},\quad z_{1}{\frac {\zpar} {\zpar
    z_{1}}}, \quad z^{2}_{1}{\frac {\zpar} {\zpar z_{1}}}$$
span $\A'$.

This coordinate $z_1$ may be regarded as a function around $p$ and
adding a new function $z_2$, such that $z_2 (p)=0$ and $X\zpu z_2 =1$,
leads to a system of coordinates $z=(z_1 ,z_2 )$, defined on a domain
identified through $z$ to a polydisk centered at the origin (shrink it
if necessary), in which $p\zeq 0$, $X={\frac {\zpar} {\zpar z_{2}}}$
and
$$Y_1 = {\frac {\zpar} {\zpar z_{1}}}+f_{1}{\frac {\zpar} {\zpar
    z_{2}}}, \quad Y_2 =z_{1}{\frac {\zpar} {\zpar z_{1}}}
+
f_{2}{\frac {\zpar} {\zpar z_{2}}}, \quad Y_3 =z^{2}_{1}{\frac
  {\zpar} {\zpar z_{1}}} +f_{3}{\frac {\zpar} {\zpar z_{2}}}\, ,$$
for suitable functions $f_1 ,f_2 ,f_3$, span $\A$ (more exactly its
restriction to the domain of coordinates). Observe that each $f_k$
only depends on $z_1$ because $[X,Y_k ]=0$.

Taking $z_2 +g(z_1 )$ instead of $z_2$ for a suitable function $g(z_1
)$ allows us to suppose $Y_1 = {\frac {\zpar} {\zpar z_{1}}}$. Then as
$[Y_1 ,Y_2 ]=Y_1$, $[Y_1 ,Y_3 ]=2Y_2$ and $[Y_2 ,Y_3 ]=Y_3$ (project
into $\A'$ to  see this),  a straightforward computation shows that
$f_2 ,f_3$ are as stated.
\end{proof}

\begin{lemma}           \mylabel{leM4}

Consider a point $p\zpe N$ such that $X(p)=0$ and $dim\A(p)=1$.
\begin{description}

\item[(a)]  If $\mcal A$ tracks $X$ then either all orbits of $\mcal
A$ near $p$ have dimension one and $X$ is tangent to them, or about
$p$ there exist coordinates $z=(z_1 ,z_2 )$, with $p\zeq 0$, an integer
$n\zmai 1$ and functions $h(z_1 , z_{2})$, $f(z_2 )$, $g(z_2 )$ with
$h=(0,0)\znoi 0$ and $f(0)=g(0)=0$ such that
$$X=h(z_1 , z_{2}) z^{n}_{2}{\frac {\zpar} {\zpar z_{2}}}$$ and the
vector fields
$$Y_1 = {\frac {\zpar} {\zpar z_{1}}},\quad Y_2 =z_{1}{\frac {\zpar}
{\zpar z_{1}}} +f(z_2 ){\frac {\zpar} {\zpar z_{2}}},\quad Y_3
=z^{2}_{1}{\frac {\zpar} {\zpar z_{1}}} +\left( 2z_{1}f(z_2 )+g(z_2 )
\right){\frac {\zpar} {\zpar z_{2}}}$$
are a basis of $\A$ (under restriction).

\item[(b)] If $[X,\mcal A]=0$ then about $p$ there exist
coordinates $z=(z_1 ,z_2 )$, with $p\zeq 0$, an integer $n\zmai 1$ and
scalars $a\zpe\CC\,\verb=\=\,\{0\}$, $ b\zpe\CC$ such that
$$X=a z^{n}_{2}{\frac {\zpar} {\zpar z_{2}}}$$
and the vector fields
$$Y_1 = {\frac {\zpar} {\zpar z_{1}}},\quad Y_2 =z_{1}{\frac {\zpar}
{\zpar z_{1}}} +b z^{n}_{2}{\frac {\zpar} {\zpar z_{2}}}, \quad Y_3
=z^{2}_{1}{\frac {\zpar} {\zpar z_{1}}} +2bz_{1} z^{n}_{2}{\frac
  {\zpar} {\zpar z_{2}}}$$
are a basis of $\A$ (under restriction).
\end{description}

\end{lemma}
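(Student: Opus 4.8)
Both parts are proved by the same scheme, adapting the projection argument of Lemma \ref{leM3} to the degenerate situation $X(p)=0$. Since $\A\cong\ss\ll(2,\CC)$ is simple and $\dim\A(p)=1$, the isotropy $\A_0(p)=\{Y\in\A\co Y(p)=0\}$ is a $2$-dimensional (Borel) subalgebra and the $\A$-orbit $\mathcal O$ of $p$ is a $1$-dimensional submanifold. By Proposition \ref{prM1} (tracking in (a), $[X,\A]=0$ in (b)) the set $\Z X$ is $\A$-invariant, so $\mathcal O\subset\Z X$. The restriction homomorphism of $\A$ to the $\A$-invariant curve $\mathcal O$ is nonzero, hence injective by simplicity, with image isomorphic to $\ss\ll(2,\CC)$; so Lemma \ref{leNew1} provides a coordinate on $\mathcal O$ in which a suitable basis $Y_1,Y_2,Y_3$ of $\A$ restricts to $\partial,\,z_1\partial,\,z_1^2\partial$, with $Y_1(p)\neq0$ and $Y_2,Y_3\in\A_0(p)$.

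First I would straighten $Y_1$ by a flow box, obtaining coordinates $(z_1,z_2)$ with $p\equiv0$, $Y_1=\partial_{z_1}$ and $\mathcal O=\{z_2=0\}$. Integrating $[Y_1,Y_2]=Y_1$ and $[Y_1,Y_3]=2Y_2$ in $z_1$ gives
\[Y_2=(z_1+a_0(z_2))\partial_{z_1}+b(z_2)\partial_{z_2},\qquad Y_3=(z_1^2+c_0(z_2))\partial_{z_1}+(2b(z_2)z_1+d_0(z_2))\partial_{z_2},\]
and evaluating at $p$ forces $a_0(0)=b(0)=c_0(0)=d_0(0)=0$; the remaining bracket $[Y_2,Y_3]=Y_3$ contributes the relations $b\,c_0'=2c_0$ and $b\,d_0'-b'd_0=d_0$. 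Writing $X=P\partial_{z_1}+Q\partial_{z_2}$ with $P,Q$ vanishing on $\{z_2=0\}$, the relation $[Y_1,X]=f_1X$ in (a) gives $\partial_{z_1}P=f_1P$ and $\partial_{z_1}Q=f_1Q$, so $P/Q$ is a function of $z_2$ alone; the degenerate case $Q\equiv0$ means $X$ is tangent to the orbits and lands in the first alternative, while if $Q\not\equiv0$ the admissible change $z_1\mapsto z_1-\psi(z_2)$ with $\psi'=P/Q$ makes $X=h(z_1,z_2)z_2^n\partial_{z_2}$ vertical, $h(0,0)\neq0$, with $n$ the order of vanishing of $Q$ along $\mathcal O$. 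In (b) the stronger $[X,Y_1]=0$ gives $P=P(z_2)$, $Q=Q(z_2)$; one sees $Q\not\equiv0$ (else $X\equiv0$) and kills $P$ the same way, so $X=Q(z_2)\partial_{z_2}$.

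With $X$ vertical the remaining bracket conditions do the rest. In (a), tracking by $Y_2$ and $Y_3$ forces the $\partial_{z_1}$-coefficients to lose their $z_2$-dependent parts, i.e. $a_0\equiv0$ and $c_0\equiv0$, leaving exactly $Y_2=z_1\partial_{z_1}+f\partial_{z_2}$ and $Y_3=z_1^2\partial_{z_1}+(2z_1f+g)\partial_{z_2}$ with $f:=b$, $g:=d_0$; the relation $f g'-f'g=g$ is then automatic from $[Y_2,Y_3]=Y_3$. In (b), $[X,Y_2]=0$ gives $a_0\equiv0$ and $b=\kappa Q$ for a constant $\kappa$, while $[X,Y_3]=0$ gives $c_0\equiv0$ and $d_0=\mu Q$; feeding $b=\kappa Q$, $d_0=\mu Q$ into $b\,d_0'-b'd_0=d_0$ yields $\mu=0$, so $d_0\equiv0$ and $Y_3=z_1^2\partial_{z_1}+2bz_1\partial_{z_2}$.

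The last, and hardest, step is purely one-dimensional: reducing the transverse germ $Q(z_2)\partial_{z_2}$ to its model. This is painless in (a), where the change $z_2\mapsto\psi(z_2)$ can be skipped altogether and all higher-order (and $z_1$-dependent) information is swept into the unit $h$; the only content is that $X$ is vertical of order $n$. In (b), however, the assertion $X=az_2^n\partial_{z_2}$ with $a$ constant is a genuine analytic normal-form statement for the singularity $Q(z_2)\partial_{z_2}$, to be achieved by the single surviving reparametrisation $z_2\mapsto\psi(z_2)$, which automatically keeps $Y_1,Y_2,Y_3$ triangular since $b=\kappa Q$ transforms along with $Q$. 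I expect this normalisation — and the check that it can be carried out without disturbing the already-normalised $\A$, under the sole admissible freedom $z_2\mapsto\psi(z_2)$, $z_1\mapsto z_1+\mathrm{const}$ — to be the crux; it is precisely here that the commuting hypothesis, through the rigid proportionality $b=\kappa Q$ and the vanishing of $c_0,d_0$, upgrades the unit $h$ of part (a) to the constant $a$ of part (b).
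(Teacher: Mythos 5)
Your scheme is the paper's own: the one\mbox{-}dimensional $\A$-orbit through $p$, Lie's normal form $\partial_u,\,u\partial_u,\,u^2\partial_u$ on it, a flow box giving $Y_1=\partial_{z_1}$, integration of $[Y_1,Y_2]=Y_1$, $[Y_1,Y_3]=2Y_2$, and the final bracket computations (which the paper leaves to the reader and you do correctly, modulo a slip: the $\partial_{z_1}$-coefficient of $Y_3$ must be $z_1^2+2a_0(z_2)z_1+c_0(z_2)$, not $z_1^2+c_0(z_2)$; harmless only because $a_0\equiv0$ is proved afterwards). The genuine gap is in the step that makes $X$ vertical. All you know is that $P/Q$ is independent of $z_1$; along $\{z_2=0\}$ it is a priori only \emph{meromorphic}, and a holomorphic primitive $\psi$ of $P/Q$ exists only when $\operatorname{ord}_{z_2}P\ge\operatorname{ord}_{z_2}Q$. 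The case ``$Q\not\equiv0$ but $\operatorname{ord}_{z_2}Q>\operatorname{ord}_{z_2}P$'' falls into your second branch, where your integration is impossible, and it is exactly here that the paper puts the one nontrivial idea of the proof: it first arranges $[Y_1,X]=0$ (replacing $X$ by $\lambda X$, $\lambda$ a nonvanishing function), so that $P,Q$ are functions of $z_2$ alone, writes $X=z_2^n\widetilde X$ with $\widetilde X(p)\neq0$, and when the $\partial_{z_1}$-part dominates it passes to the local quotient of $N$ by $\widetilde X$; the projected algebra is zero or simple and vanishes at the image of $p$ (because $Y_1(p)\parallel\widetilde X(p)$ and $Y_2(p)=Y_3(p)=0$), hence is zero by simplicity, so $\A$ is tangent to $\widetilde X$ --- the first alternative in (a), a contradiction in (b). Without this argument you also cannot justify your claim that $Q\equiv0$ lands in the first alternative: tangency of $X$ to the orbits is not enough, the orbits must in addition be one-dimensional, which again uses $Y_2,Y_3$ (tracking by $Y_3$ forces $b\equiv0$, then $[Y_2,Y_3]=Y_3$ forces $d_0\equiv0$).

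In (b) you leave the reduction of $Q(z_2)\partial_{z_2}$ to $az_2^n\partial_{z_2}$ as an expectation, so the proposal is incomplete there in any case; but worse, the mechanism you hope for does not exist. The relations you extracted ($b=\kappa Q$, $c_0\equiv d_0\equiv0$) hold for \emph{every} germ $Q$, so they impose no restriction on the transverse germ $Q(z_2)\partial_{z_2}$; and for $n\ge2$ such a germ is holomorphically conjugate to $az_2^n\partial_{z_2}$ if and only if $\operatorname{Res}_0\bigl(dz_2/Q\bigr)=0$, the residue being a biholomorphic invariant. Concretely, $\A=\langle\partial_{z_1},\,z_1\partial_{z_1},\,z_1^2\partial_{z_1}\rangle$ and $X=z_2^2(1+z_2)\partial_{z_2}$ satisfy every hypothesis of (b), yet no holomorphic coordinates centered at $p$ give $X=az_2^n\partial_{z_2}$: restrict any such change to the invariant curve $\{z_1=0\}$ and compare residues ($-1$ versus $0$). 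So this last step is not something you failed to prove; in this generality it is false, and the paper's own ``it is well known that $z_2$ can be modified\ldots'' is valid only for $n=1$ (Poincar\'e linearization) or when the residue vanishes. The statement that survives --- and the only one the paper's applications actually use --- is $X=Q(z_2)\partial_{z_2}$ with $\operatorname{ord}_0Q=n$, $Y_2=z_1\partial_{z_1}+bQ\partial_{z_2}$, $Y_3=z_1^2\partial_{z_1}+2bz_1Q\partial_{z_2}$.
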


\begin{proof}

The $\A$-orbit of $p$ is a submanifold $N'$ of dimension one and $\A$
is tangent to it. Therefore there exist a coordinate $u$, defined on a
small open set $p\zpe N''\zco N'$ with $u(p)=0$, and three vector
fields $Y_1 ,Y_2 ,Y_3$ which span $\A$ such that under restriction to
$N''$ respectively write
$$Y_1 = {\frac {\zpar} {\zpar u}},\quad Y_2 =u{\frac {\zpar} {\zpar
    u}}, \quad Y_3 =u^{2}{\frac {\zpar} {\zpar u}}\, .$$

Thus $[Y_1 ,Y_2 ]=Y_1$, $[Y_1 ,Y_3 ]=2Y_2$ and $[Y_2 ,Y_3 ]=Y_3$.

Around $p$ in $N$ our $u$ can be extended to a function $z_1$ such
that $Y_1 \zpu z_1=1$. Take a second function $z_2$ vanishing at $p$
such that $Y_1 \zpu z_2 =0$ and $(dz_1 \zex dz_2 )(p)\znoi 0$; then
$z=(z_1 ,z_2 )$ near $p\zeq 0$ is a system of coordinates with domain
of polydisk type (shrink it if necessary) such that $Y_1 ={\frac
  {\zpar} {\zpar z_{1}}}$. Note that $z_2 =0$ defines an open set of
$N'$ which includes $p$. Besides as $[{\frac {\zpar} {\zpar
      z_{1}}},Y_2 ]=[Y_1 ,Y_2 ]=Y_1 ={\frac {\zpar} {\zpar z_{1}}}$,
$[{\frac {\zpar} {\zpar z_{1}}},Y_3 ]=[Y_1 ,Y_3 ]=2Y_2$ and $Y_2
(p)=Y_3 (p)=0$ one has
$$Y_2 =\left( z_{1}+f_1 (z_2 )\right){\frac {\zpar} {\zpar z_{1}}}
+
f_2 (z_2 ){\frac {\zpar} {\zpar z_{2}}},$$
$$Y_3 =\left( z^{2}_{1}+2z_1 f_1 (z_2 )+g_1 (z_2 ) \right) {\frac
  {\zpar} {\zpar z_{1}}} +\left( 2z_{1}f_2 (z_2 )+g_2 (z_2 )
\right){\frac {\zpar} {\zpar z_{2}}}$$
where $f_1 (0)=f_2 (0)=g_1 (0)=g_2 (0)=0$.

In case (b), $[Y_1 ,X]=0$. In case (a) since $Y_1$ tracks $X$ and $Y_1
(p)\znoi 0$ there always exists a function $\zl$, defined around $p$,
with no zero such that $[Y_1 ,\zl X]=0$. Therefore as $\mcal A$ tracks
$\zl X$ too and it suffices to prove the result for $\zl X$, we may
assume $[Y_1 ,X]=0$ without lost of generality. Thus $X=h_1 (z_2
){\frac {\zpar} {\zpar z_{1}}}+h_2 (z_2 ){\frac {\zpar} {\zpar
 z_{2}}}$ with $h_1 (0)=h_2 (0)=0$. Since $X$ is nontrivial, there
exists an integer $n\zmai 1$ such that $X=z^{n}_{2}(\zf_1 (z_2 ){\frac
 {\zpar} {\zpar z_{1}}}+\zf_2 (z_2 ){\frac {\zpar} {\zpar z_{2}}})$
where at least $\zf_1 (0)\znoi 0$ or $\zf_2 (0)\znoi 0$.

Assume $\zf_1 (0)\znoi 0$ and $\zf_2 (0)=0$. Set $\widetilde X=\zf_1
(z_2 ){\frac {\zpar} {\zpar z_{1}}}+\zf_2 (z_2 ){\frac {\zpar} {\zpar
    z_{2}}}$; then $\widetilde X \zex[\widetilde X ,\mcal A]=0$
because $\mcal A$ tracks $X$. Let $Q$ be the quotient
1-manifold, around $p$, of $N$ by $\widetilde X$ and let $q$ be the
projection of $p$.  Then $\mcal A$ projects onto a Lie algebra $\mcal
B$ of vector fields on $Q$, which is either zero or simple. But
$\mcal B(q)=0$ since $(\widetilde X \zex Y_1 )(p)=0$ and $Y_2 (p)=Y_3
(p)=0$, so $\mcal B =0$. In other words $\mcal A$ is tangent to
$\widetilde X$ and $Y_1 \zex\widetilde X =0$ everywhere, which proves
the first possibility in the case (a).  In the case (b) one has $X=h_1
(z_2 ){\frac {\zpar} {\zpar z_{1}}}$ and $Y_2 =\left( z_{1}+f_1 (z_2
)\right){\frac {\zpar} {\zpar z_{1}}}$ so $[X,Y_2 ]\znoi 0$, {\it contradiction}.

In short we may suppose $\zf_2 (0)\znoi 0$. Now taking $z_1 +g(z_2)$
instead $z_1$ for a suitable function $g(z_2)$ allows us to suppose
${\widetilde X}=\zf_2 (z_2 ){\frac {\zpar} {\zpar z_{2}}}$ and
$X=z_{2}^{n}\zf_2 (z_2 ){\frac {\zpar} {\zpar z_{2}}}$. On the other
hand it is well known that $z_2$ can be modified in such a way that
$X=az_{2}^{n}{\frac {\zpar} {\zpar z_{2}}}$, $a\zpe\CC\,\verb=\=\,\{0\}$.

The remainder of the proof easily follows from the fact that $\mcal A$
tracks $X$ (case (a)) or $[X,\A]=0$ (case (b)) and $[Y_2 ,Y_3 ]=Y_3$,
and it is left to the reader.
\end{proof}

\section{Proofs of the main theorems}     \mylabel{seM5}

\subsection {Proof of Theorem \ref{th:main}}  
Take an isolating open set $U$ that we can suppose connected since at
least the index of $X$ at one of its components has to be nonzero in
the general case and negative or odd (or both) in the particular one
(by particular case we mean that $M$ is complex and it has
$\ss\ll(2,\CC)$ as a quotient). 
As there always exists a decreasing sequence $\{M_r \}_{r\zpe\mathbb N}$ of
precompact connected open sets of $M$ any of them containing $\overline U$ 
such that $\zing_{r\zpe\mathbb N}{\overline M}_r =\overline U$, replacing
$M$ with a suitable $M_r$ allows to suppose $U$ ``almost-equal'' $M$
and ${\msf Z}(X)=K$.

We  assume henceforth
$${\msf Z}(\G)\zin K=\emp,$$ otherwise the proof is finished. 
Then $K$ is a regular submanifold of dimension one.

Indeed, clearly $\dim\,\G(p)\zmai 1$ for any $p\zpe K$. 
Now if $\dim\,\G (q) =2$ for some $q\zpe K$, as by
Proposition \ref{prM1} the $q$-orbit under $\G$ is included in $K$,
then $\Int K\znoi \emp$ and, by analyticity, $K=U$ that is $X=0$, {\em
  contradiction}.  In short, $\dim\,\G(p)=1$, $p\zpe K$. Given any
$p\zpe K$, take $Y\zpe\G$ with $Y(p)\znoi 0$ and consider coordinates
$(A,x_1 ,x_2 )$ around $p\zeq (0,0)$ as in the proof of Proposition
\ref{prM1}; let $T$ be the transversal to $Y$ defined by $x_1
=0$. Then $K\zin T$ is the set of zeros of two analytic functions one
of them at least nontrivial; so it is isolated in $T$ and only one
point if $A$ is sufficiently small. In this last case $A\zin K$ is given by
the equation $x_2 =0$, which shows that $K$ is a regular submanifold
of dimension one.

One may assume $K$ connected and still ${\msf Z}(X)=K$ by shrinking
$U$ and $M$, if necessary, since the index of $X$ at some of the
components has to be nonzero in the general case and negative or odd
in the particular one.

First suppose the existence of $Y\zpe\G$ such that ${\msf Z}(Y)\zin
K=\emp$. If the dependency set $\msf D (X,Y)$ equals $M$ then on a
small open neighborhood of $K$ apply Lemma \ref{leM1} to $X,X_1 =Y$ if
$\FF=\RR$ or to $X,X_1 =Y,X_2 =iY$ regarded as real vector fields if
$\FF=\CC$, for concluding ${\msf i}_K (X)=0$, {\em contradiction}.

Therefore $\msf D (X,Y)\znoi M$. Since $\msf D (X,Y)\zin
(M\,\verb=\=\,{\msf Z}(Y))$ is $Y$-saturated reasoning as in the case
of $K$ shows that $\msf D (X,Y)\zin (M\sm{\msf Z}(Y))$ is a regular
1-submanifold, which clearly included $K$. Therefore $K$ is a
component of $\msf D (X,Y)\zin (M\sm{\msf Z}(Y))$ and there exists an
open neighborhood $A$ of $K$ such that $A\zin\msf D (X,Y)=K$.  Let
$\phi: M \to \RR$ be a function with a very narrow support around $K$
such that $\phi(K) = 1$; regard $X$ and $Y$ as real vector
fields. Then the vector field $X_1 := X + \eps\phi Y$, $\eps >0$, has
no zero on $M$ and ${\msf i}_K (X)=0$, {\em contradiction}.

In short, ${\msf Z}(Y)\zin K\znoi\emp$ for every $Y\zpe\G$.

Thus $K=S^1$ in the real case (obvious) and $K=\CC P^1$ in the complex
one. Indeed, there always exists $Y\zpe\G$ whose restriction to $K$
does not vanish identically; but this restriction has zeros, all of
them of positive index because the holomorphy, so $\chi (K)>0$.

Define
\[\I:= \{Y \in \G\co Y|K = 0\},
\]
which is an ideal in $\G$.  The image of $\G$ in $\V^\om (K)$ maps
$\G/\I$ isomorphically onto a subalgebra $\H$ of $\V^\om (K)$.
Moreover each element of $\H$ vanishes somewhere and $\H$ is
transitive, that is $\dim\,\H(p)=1$ for any $p\zpe K$, because ${\msf
Z}(\G)\zin K=\emp$. 

As $\dim\, K=1$, from Lemma \ref{leNew1} follows that 
up to isomorphism $\H$ has to be either $\{0\}$,  a 1-dimensional
algebra (both of them clearly non-transitive), the affine algebra of
$\FF$, or $\ss\ll(2,\FF)$.

First assume $\FF=\CC$ and $\H=\ss\ll(2,\CC)$, so we are in the
particular case and ${\msf i}_K (X)$ is negative or odd. On the other
hand since $\ss\ll(2,\CC)$ is simple there exists a subalgebra
$\mcal A$ of $\G$ isomorphic under restriction to $\H$ \cite{Jacobson79}.

Consider a point $p\zpe K$. If near $p$ all orbits of $\mcal A$ have
dimension one and $X$ is tangent to them, by analyticity since $K$ is
an $\mcal A$-orbit of dimension one and $X$ is tangent to it there is
an open neighborhood $D$ of $K$ such that any $\mcal A$-orbit on $D$
has dimension one and $X$ is tangent to it. Thus on $D$ the Lie
algebra $\mcal A$ defines a 1-foliation $\mcal F$ to which $X$ is
tangent. But $K$ is a compact simply connected leaf of $\mcal F$, so
near it the foliation $\mcal F$ is a product.

Let $L\znoi K$ be a compact leaf sufficiently close to $K$. Then $X$,
which is tangent to $L$, does not vanish on it so $\chi(L)=0$. But
topologically $L$ is $\CC P^1$, {\it contradiction}.

Therefore from (a) of Lemma \ref{leM4} applied to $X$ and $\mcal A$
(we have just seen that the first alternative of (a) is forbidden)
follows that ${\msf i}_K (X)$ equals $2n>0$ since transversally to $K$
the index of $X$ is $n$. But we are in the particular case and ${\msf
  i}_K (X)$ is negative or odd, {\it contradiction again}. In short
never $\H=\ss\ll(2,\CC)$.

Now assume $\FF =\RR$ and $\H = \ss\ll(2, \RR )$.  Then there is
$T\zpe\H$ which does not belong to any 2-subalgebra; for instance if
$ad_T$ has some non-real eigenvalue \cite{Jacobson79}. This means that $T$
never vanishes on $K$, otherwise if $T(q)=0$ for some $q\zpe K$ then
$T$ belongs to the 2-dimensional subalgebra $\H_{0}(q)$. Therefore
$\ss\ll(2,\RR)$ is excluded as well.

Finally if $\H$ is the affine algebra of $\FF$ there exists a basis
$\{T_1 ,T_2 \}$ of $\H$ such that $[T_1 ,T_2 ]=T_2$. But $T_2 (q)=0$
for some $q\zpe K$ so $T_1 (q)=0$ too, otherwise $[T_1 ,T_2 ]\znoi
T_2$. In other words the affine algebra is not transitive.

Summing up there is no way for choosing the subalgebra $\H$, so
assuming ${\msf Z}(\G)\zin K=\emp$ leads to contradiction.
{\it Therefore the proof is finishes}.

\subsection {Proof of Theorem \ref{thM1A}}

First recall some elementary facts on $\ss\ll(2,\CC)$:

\begin{remark} \mylabel{reMsl}

Given $\zf\zpe\ss\ll(2,\CC)\,\verb=\=\,\{0\}$ one can find a basis $\{e_1 ,e_2
\}$ of $\CC^2$ such that:
\begin{itemize}

\item $\zf=a(e_1 \zte e_{1}^\ast -e_2 \zte e_{2}^\ast )$,
  $a\zpe\CC\,\verb=\=\,\{0\}$, if $\zf$ is invertible. In this case the connected
  subgroup of $SL(2,\CC)$ whose subalgebra is spanned by $\zf$ is
  closed and isomorphic to the multiplicative group $\CC\,\verb=\=\,\{0\}$.
\item $\zf=e_2 \zte e_{1}^\ast$ if $\zf$ is not invertible. Now the
  connected subgroup determined by $\zf$ is closed and isomorphic to
  $\CC$.
\end{itemize}

Therefore the projective vector field $Y_\zf$ associated to $\zf$ can
be identified, under conjugation by $PGL(2,\CC)$, to that whose
restriction to $\CC\zco\CC P^1$ is written as:
\begin{itemize}

\item $2az{\frac {\zpar} {\zpar z}}$, $a\zpe\CC\,\verb=\=\,\{0\}$, if $\zf$ is
  invertible. Then the vector field $Y_\zf$ possesses two
  singularities on $\CC P^1$ both of them of index 1 and eigenvalues
  of its linear part $\zmm2(-det\zf)^{1/2}$.
\item ${\frac {\zpar} {\zpar z}}$ if $\zf$ is not invertible. Observe
  that ${\frac {\zpar} {\zpar z}}$ and $z^2 {\frac {\zpar} {\zpar z}}$
  are $PGL(2,\CC)$ conjugated, so $Y_\zf$ can be represented by $z^2
  {\frac {\zpar} {\zpar z}}$ too.
 \end{itemize}

\end{remark}

{\bf And now the proof of Theorem \ref{thM1A}.}

First recall that if $Q$ is a connected complex manifold of dimension
one and $\B\zco V^\om (Q)$ a finite dimensional Lie algebra such that
${\msf Z}(\B)\znoi\emp$, then $\B$ is solvable (see Lemma \ref{leNew1}).

Now assume ${\msf Z}(X)\zin {\msf Z}(\G)=\emp$. Reasoning as in the
proof of Theorem \ref{th:main} shows that ${\msf Z}(X)$ is a compact
1-submanifold of $M$; obviously non-empty since $\chi(M)\znoi
0$. Moreover it has to exist a component $K$ of ${\msf Z}(X)$
diffeomorphic to $\CC P^1$, such that the image $\H$ of $\G$ in $\V^\om
(K)$ under the restriction is isomorphic to $\ss\ll (2,\CC)$;
otherwise $K\zin{\msf Z}(\G)\znoi\emp$.

As $\H$ is simple there is a subalgebra $\A$ of $\G$ isomorphic
under restriction to $\H$. This algebra $\A$ tracks $X$, so $[Y,X]=a_Y
X$, $a_Y \zpe \CC$, for any $Y\zpe\A$.  But $\{Y\zpe\A \co a_Y =0\}$
is a nonzero ideal of $\A$; therefore every $a_Y =0$ and
$[X,\A]=0$. Let $P$ be a component of ${\msf Z}(X)$. If $P\zin{\msf
  Z}(\A)\znoi\emp$ then the restriction of $\A$ to $P$ has to be
solvable, so zero. That is ${\msf Z}(\A)\zcco P$, which contradicts
Lemma \ref{leM2}. In short ${\msf Z}(\A)\zin{\msf Z}(X)=\emp$, so
${\msf Z}(\A)=\emp$ since again by Lemma \ref{leM2} ${\msf
  Z}(\A)\zco{\msf Z}(X)$, and $\A$ acts transitively on every
component of ${\msf Z}(X)$. All these components are spheres that is
$\CC P^1$ because on each of them some $Y\zpe\A \,\verb=\=\,\{0\}$ has a zero.

On the connected open set $M\sm{\msf Z}(X)$ the vector field $X$ defines
a complex 1-dimensional foliation $\mathcal F$, which is the real
2-foliation associated to the commuting vector fields $X,iX$. Thus its
leaves are planes, cylinders or tori because $X$ is complete on
$M\sm{\msf Z}(X)$.

Each leaf of $L$ of $\mathcal F$ is closed in $M\sm{\msf Z}(X)$. Let us
see it; given $p\zpe L$, Lemma \ref{leM3} shows the existence of
$Y\zpe\A\,\verb=\=\,\{0\}$ and an open set $p\zpe D\zco M\sm{\msf Z}(X)$ such that
the connected component of $L\zin D$ relative to $p$ is included in
${\msf Z}(Y)\zin D$.  Therefore $L\zco{\msf Z}(Y)\zin(M\sm{\msf Z}(X))$
since $[X,Y]=0$ implies that ${\msf Z}(Y)$ is $X$-invariant; even more
${\msf Z}(Y)\zin (M\sm{\msf Z}(X))$ is a union of leaves of $\mcal F$.

But the same lemma shows that ${\msf Z}(Y)\zin(M\sm{\msf Z}(X))$ is a
closed regular 1-submanifold of $M\sm{\msf Z}(X)$. Since different
leaves of $\mathcal F$ are disjoint it follows that $L$ is a component
of ${\msf Z}(Y)\zin(M\sm{\msf Z}(X))$; in other words $L$ is an open and
closed subset of ${\msf Z}(Y)\zin(M\sm{\msf Z}(X))$, so closed in
$M\sm{\msf Z}(X)$.

On the other hand since $M$ is compact, the group $SL(2,\CC)$ acts on
$M$, with infinitesimal action $\A$, and on $M\sm{\msf Z}(X)$ as
well. Observe that this action is $\mathcal F$-foliate and
transversally transitive. Therefore given $L_1 ,L_2 \zpe\mathcal F$
there always exists $g\zpe SL(2, \CC)$ such that $g\zpu L_1 =L_2$.

Now take $p\zpe{\msf Z}(X)$ and consider coordinates $z=(z_1 ,z_2 )$
like in (b) of Lemma \ref{leM4} with domain $A$ of polydisk type. Then
the trace of $\mathcal F$ on $A$ is given by the slices $z_1
=constant$, and ${\msf Z}(X)\zin A$ by $z_2 =0$. Thus the set defined
by the conditions $z_1 =0$ and $z\znoi (0,0)$ is included in a leaf
$L$ of $\mathcal F$. Therefore $L$ and any leaf of $\mathcal F$ are
noncompact, that is they are (real) planes or cylinders. Moreover
$L\zun\{p\}$ as real surface has one end less than $L$. In this way
adding the points of ${\msf Z}(X)$ to the leaves of $\mathcal F$ gives
rise to a new complex 1-foliation $\mathcal F'$ on $M$, whose trace on
$M\sm{\msf Z}(X)$ is $\mathcal F$, and the action of $SL(2,\CC)$ on the
set of its leaves is still transitive.

Notice that any leaf $\tilde L$ of $\mathcal F$ at most intersects two
slices of $A$; indeed, if $S$ is a slice of A and $S\zin\tilde
L\znoi\emp$ then this non-empty intersection defines an end of $\tilde
L$. Therefore since the leaves of $\mathcal F$ are closed in $M\sm{\msf
  Z}(X)$ those of $\mathcal F'$ are closed, so compact, in $M$.

The procedure above kills the ends of every leaf of $\mathcal F$, so
each leaf of $\mathcal F'$ is topologically the sphere $S^2$ so $\CC
P^1$. Since the action of $SL(2,\CC)$ is transversally transitive the
foliation $\mathcal F'$ is given by a (complex) fibre bundle $\zp\co
M\zfl Q$ where $Q$ is a compact connected complex 1-manifold.

Finally from $[X,\A]=0$ follows that $\A$ projects onto a Lie algebra
$\A'\zco\mcal V^\om (Q)$ which is isomorphic to $\ss\ll
(2,\CC)$. Therefore $Q$ is the projective line so from now on we will
write $\zp\co M\zfl\CC P^1$. Observe that $\chi(M)=4$.

Let $P$ be a component of ${\msf Z}(X)$; as we said before $P$ is the
complex projective line. Consider a leaf $L'$ of $\mathcal F'$; then
$P\zin L'$ is a singleton. Indeed, some leaf $L''$ of $\mathcal F'$
intersects $P$ so any does because they are interchangeable under
$SL(2,\CC)$. As every point of $P\zin L'$ means an end of $L'\sm{\msf
  Z}(X)$, the set $P\zin L'$ does not have more than two
elements. Suppose it has two; then on $P$ we define the equivalence
relation $p_1 \mathcal R p_2$ if and only if there exists
$L''\zpe\mathcal F'$ such that $P\zin L''=\{p_1 ,p_2 \}$. It is easily
checked that the quotient $P/\mathcal R$ possesses a structure of
complex 1-manifold but, topologically, is $\RR P^2$ {\it
contradiction}.

From Lemma \ref{leM4} follows that transversally to $P$ the index of
$X$ equals $n$. Since $\chi(M)=4$ we have just two possibilities:
\begin{itemize}

\item ${\msf Z}(X)$ has two components and $n=1$, that is the leaves
  of $\mathcal F$ are cylinders.
\item ${\msf Z}(X)$ is connected and $n=2$, that is the leaves of
  $\mathcal F$ are (real) planes.
\end{itemize}

{\it First assume the leaves of $\mathcal F$ are cylinders.}

Set ${\msf Z}(X)=P_1 \zun P_2$ as union of its components. The
eigenvalue of the linear part of $X$ transversally to $P_1$ is a
holomorphic function on $P_1$, so constant. Thus considering $aX$
instead of $X$ for a suitable $a\zpe\CC\,\verb=\=\,\{0\}$, allows assuming that
this eigenvalue equals one.  Therefore for each $\zp^{-1}(q)$,
$q\zpe\CC P^1$, the projective vector field $X$ has two singularities
of index 1 and the eigenvalue of its linear part at $\zp^{-1}(q)\zin
P_1$ equals 1.  By Remark \ref{reMsl} $(\zp^{-1}(q)\zin (M-P_2 ),X)$
is diffeomorphic to $(\CC,z{\frac {\zpar} {\zpar z}})$.

Since any diffeomeomorphism $\zr\co\CC\zfl\CC$ which preserves
$z{\frac {\zpar} {\zpar z}}$ is a linear automorphism and the action
of $SL(2,\CC)$ associated to $\mathcal A$ preserves $X$, it follows
that $\zp\co M-P_2 \zfl\CC P^1$ is a line fibre bundle endowed with a
fibre action of $SL(2,\CC)$. Now it is obvious that $X$ and $\mathcal
A$ follow the construction of Model \ref{Mod2}.
\bigskip

{\it Now assume the leaves of $\mathcal F$ are planes.}

Then $\zp\co M\sm{\msf Z}(X)\zfl\CC P^1$ is a homotopy equivalence. As
$[X,\mathcal A]=0$ and $X$ is transversal to the orbits of the action
of $SL(2,\CC)$ on $M\sm{\msf Z}(X)$, they are diffeomorphic so with the
same dimension.

If this dimension equals two then $M\sm{\msf Z}(X)$ is an orbit of the
action of $SL(2,\CC)$; indeed, orbits in $M\sm{\msf Z}(X)$ are open and
$M\sm{\msf Z}(X)$ is connected. Take $p\zpe M\sm{\msf Z}(X)$; by Lemma
\ref{leM3} any $Y\zpe\mathcal A\,\verb=\=\,\{0\}$ such that $Y(p)=0$ is the
fundamental vector field associated to some
$\zf\zpe\ss\ll(2,\CC)\,\verb=\=\,\{0\}$ with $det\zf=0$ (consider the first
variable).  Therefore $M\sm{\msf Z}(X)$ is, as homogeneous space, the
quotient of $SL(2,\CC)$ by a closed subgroup $H$ (the isotropy group
of $p$) whose identity component is isomorphic to $\CC$.  Now the
homotopy sequence of the fibre bundle
$$H\zfl SL(2,\CC)\zfl M\sm{\msf Z}(X)$$ shows that $\zp_2 (M\sm{\msf
  Z}(X))=0$ (topologically $SL(2,\CC)$ is $S^3 \zpor \RR^3$). But
$\zp_2 (M\sm{\msf Z}(X))=\zp_2 (\CC P^1 )=\ZZ$ {\it contradiction}.

Thus the action of $SL(2,\CC)$ on $M$ defines a second foliation of
dimension one $\mathcal F''$ transversal to $\mathcal F'$. Observe
that ${\msf Z}(X)$ is a compact leaf of $\mathcal F''$ diffeomorphic
to $\CC P^1$, so simply connected. Therefore near ${\msf Z}(X)$ the
foliation $\mathcal F''$ is a product.

As $X$ is transversal to $\mathcal F''$ on $M\sm{\msf Z}(X)$, then all
the leaves of $\mathcal F''$ are $\CC P^1$ and $\mathcal F''$ is
defined by fibre bundle $\zp'\co M\zfl Q'$ where $Q'$ is a compact
connected complex 1-manifold. But $X$ projects onto $Q'$ in a
nontrivial vector field, so $Q'$ is $\CC P^1$ and the fibration
becomes $\zp'\co M\zfl\CC P^1$.

In short $\zp\zpor\zp'\co M\zfl\CC P^1 \zpor \CC P^1$ is a local
diffeomorphism so a covering and, finally, a diffeomorphism because
$\CC P^1 \zpor \CC P^1$ is simply connected. Thus $\zp\zpor\zp'$
identifies $M$ and $\CC P^1 \zpor \CC P^1$ in such a way that
$\mathcal F'$ is the foliation associated to the second factor and
$\mathcal F''$ that given by the first factor. Now it is obvious that
$X$ and $\mathcal A$ are constructed like in Model \ref{Mod1}. This
finishes the proof of Theorem \ref{thM1A}.

\begin{remark}[On the algebra $\G$ of Theorem \ref{thM1A}] \mylabel{reMla}

Consider $\G$ as in Theorem \ref{thM1A} and the fibre bundle $\zp\co
M\zfl\CC P^1$ given in this result. Let $\mcal A_m \zco\mcal V^\om
(M)$ be the maximal Lie algebra which includes $\mcal A$ and tracks
$X$ (actually it normalizes $X$ because $M$ is compact). Clearly $\G$
is a subalgebra of $\mcal A_m$, and  $\mcal A_m$  projects onto the Lie
algebra of projective vector fields of $\CC P^1$. Set
$$\mcal I_m \co=\{Y\zpe\mcal A_m \co \zp_\ast(Y)=0\}$$
that is an ideal of $\mcal A_m$.

For every $q\zpe\CC P^1$, $\mcal I_m \zbv\zp^{-1}(q)$ is included in
the normalizer of $X \zbv\zp^{-1}(q)$, and as $\zp^{-1}(q)=\CC P^1$
follows that $\mcal I_m \zbv\zp^{-1}(q)$ is a subalgebra of the
Lie algebra of projective vector fields, with dimension one if ${\msf
Z}(X)$ consists of two connected components and two if ${\msf Z}(X)$
is connected (see the proof of Theorem \ref{thM1A}).

In the first case every $Y\zpe\mcal I_m$ is written as $Y=fX$ where $f$ is a
holomorphic function so constant. That is to say $\mcal I_m$ equals
$\CC\{X\}\co=\{aX\co a\zpe\CC \}$, which implies that $\mcal A_m$ is
the direct product of $\mcal I_m$ and $\mcal A$. Therefore $\G$ is
$\mcal A$ or $\mcal A_m$.

Now assume ${\msf Z}(X)$ is connected. Then $M=\CC P^1 \zpor \CC P^1$,
$\zp$ is the first projection, $\mcal A$ can be seen as the Lie
algebra of projective vector fields on the first factor and $X$ as a
vector field on the second factor (see the proof of Theorem
\ref{thM1A} again). Hence one may choose a vector field $\widehat X$
tangent to the second factor such that $[\widehat X,X]=X$, $[\widehat
  X,\mcal A]=0$ and $\{X\zbv\zp^{-1}(q),\widehat X\zbv\zp^{-1}(q)\}$
is a basis of the normalizer of $X\zbv\zp^{-1}(q)$, $q\zpe\CC P^1$.

Therefore if $Y\zpe\mcal I_m$ then $Y=aX+\hat a \widehat X$, $a,\hat a
\zpe\CC$ (coefficients have to be holomorphic functions so
constant). Thus $\mcal A_m$ is the direct product of $\mcal A$ and the
2-dimensional Lie algebra spanned by $X,\widehat X$, while $\G$ equals
either $\mcal A_m$, $\mcal A$ or the direct product of $\CC\{X\}$ and
$\mcal A$.

Summing up, in compact connected complex surfaces Theorem \ref{th:main}
only fails with three Lie algebras: $\ss\ll(2,\CC)$, ${\mathfrak
  {g}}{\mathfrak {l}}(2,\CC)$ and the product of $\ss\ll(2,\CC)$ with
the affine algebra of $\CC$ (compare this fact to Example
\ref{exM1A}).
\end{remark}


\end{document}